\documentclass[11pt]{amsart}
 
\usepackage{hyperref, color}
\newtheorem{theorem}{Theorem}[section]
\newtheorem{lemma}[theorem]{Lemma}
\newtheorem{proposition}[theorem]{Proposition}

\newtheorem{assumption}[theorem]{Assumption}

\usepackage{esint} % for the \fint command
\usepackage{amsmath}

\providecommand{\Ric}{\operatorname{Ric}}
\providecommand{\Sec}{\operatorname{Sec}}

\providecommand{\tr}{\operatorname{tr}}

\newcommand{\II}{\mathrm I\!\mathrm I}

\theoremstyle{definition}
\newtheorem{definition}[theorem]{Definition}

\theoremstyle{remark}

\numberwithin{equation}{section}

\begin{document}

\title
[Rigidity Results for CMC Hypersurfaces]
{First Eigenvalue of Jacobi Operator and Rigidity Results for Constant Mean Curvature Hypersurfaces}

\author[Batista]{M\'arcio Batista}
\author[Cavalcante]{Marcos P. Cavalcante}
\author[Melo]{Luiz R. Melo}

\address{CPMAT-IM, Universidade Federal de Alagoas, Macei\'o - Brazil}
\email{mhbs@mat.ufal.br, marcos@pos.mat.ufal.br, luiz.melo@im.ufal.br}

\subjclass[2020]{Primary: 53C42, 58J50. Secondary: 35P15, 49Q10}
\keywords{Jacobi operator; constant mean curvature hypersurfaces; geometric rigidity; free boundary problems; Jacobi–Steklov eigenvalues.}
\date{\today}

\commby{}

\begin{abstract}
In this paper, we obtain geometric upper bounds for the first eigenvalue $\lambda_1(J)$ of the Jacobi operator for both closed hypersurfaces and compact hypersurfaces with boundary having constant mean curvature (CMC). As an application, we derive new rigidity results for the area of CMC hypersurfaces under suitable conditions on $\lambda_1(J)$ and the curvature of the ambient space. We also address the Jacobi--Steklov problem, proving geometric upper bounds for its first eigenvalue $\sigma_1(J)$ and deriving rigidity results related to the length of the boundary. Additionally, we present some results in higher dimensions related to the Yamabe invariants.
\end{abstract}

\maketitle

%=========================================================
\section{Introduction}
%=========================================================
In the theory of minimal and constant mean curvature hypersurfaces, the Jacobi operator is one of the basic tools connecting spectral data with geometry and topology. Its first eigenvalue often detects strong rigidity phenomena, especially under lower curvature bounds on the ambient manifold.

For a two-sided compact hypersurface $\Sigma^n$ immersed in an orientable Riemannian manifold $M^{n+1}$, the Jacobi operator is
\[
J=\Delta+\Ric(N,N)+|A|^2,
\]
where $N$ is a global unit normal along $\Sigma^n$ and $A$ is the shape operator. The first eigenvalue of $J$, denoted by $\lambda_1(J)$, plays a central role in the study of stability and in the interaction between the geometry of $\Sigma^n$ and the curvature of the ambient manifold.

In dimension two, the interplay between $\lambda_1(J)$, the scalar curvature of the ambient manifold, and the topology of the surface goes back to the work of Schoen and Yau \cite{SchoenYau}, who showed that a closed orientable stable minimal surface in a $3$-manifold with nonnegative scalar curvature must be either a sphere or a torus, with infinitesimal rigidity in the torus case. Since then, several estimates for $\lambda_1(J)$ have been obtained for closed hypersurfaces and surfaces under different geometric assumptions; see, for instance, \cite{ABB05, AMO15, CC17, Cheng08, CLS17, DPT23, MO14, perdomo2002, Zhu}. We also mention the work of the first-named author with Santos \cite{BatistaSantos}, where general upper bounds for $\lambda_1(J)$ were obtained for closed CMC surfaces in $3$-manifolds under a lower bound on the scalar curvature.

The purpose of this paper is twofold. We first establish a general dimension-$n$ estimate for the first eigenvalue of the Jacobi operator in the closed case and then specialize it to dimension two, where Gauss--Bonnet converts the scalar curvature term into topological information. After that, we turn to the free-boundary setting and to the associated Jacobi--Steklov problem, again proceeding from the general dimension-$n$ framework to the two-dimensional consequences.

Throughout the paper, unless otherwise stated, we assume that $M^{n+1}$ and $\Sigma^n$ are connected and orientable, and that $\Sigma^n$ is compact. When $\partial\Sigma\neq\emptyset$, we assume that $\Sigma^n$ has free boundary in $M^{n+1}$ in the sense that $\partial\Sigma$ intersects $\partial M$ orthogonally.

Before stating our results, we fix some notation. We denote by $\Sec$, $\Ric$, and $S$ the sectional, Ricci, and scalar curvatures of $M^{n+1}$, respectively. We denote by $S_\Sigma$ the scalar curvature of the induced metric on $\Sigma^n$. If $\partial M\neq\emptyset$, then $\II^{\partial M}$ denotes the second fundamental form of $\partial M$ in $M$, computed with respect to the outward unit normal.

The statement below is the main result of the first part of the paper.

\begin{theorem}\label{T4}
Let $M^{n+1}$ be a closed orientable Riemannian manifold with scalar curvature $S \geq n(n+1)$, and let $\Sigma^n \subset M^{n+1}$ be a closed, immersed, orientable hypersurface with constant mean curvature $H$. If $\lambda_1(J) \geq -n$, then
\begin{equation}\label{media}
n(n-1)|\Sigma^n| \leq \int_{\Sigma^n} S_{\Sigma} \, dv.
\end{equation}

Moreover, equality holds in \eqref{media} if and only if $\Sigma^n$ is totally geodesic, $S\big|_{\Sigma} = n(n+1)$, and $\Ric(N,N)=n$.

Furthermore, in the equality case in \eqref{media}, if $\Sec\geq 1$ and either
\begin{enumerate}
    \item $n$ is even, or
    \item $n$ is odd and $\Sigma^n$ is simply connected,
\end{enumerate}
then $M^{n+1}$ is isometric to $\mathbb{S}^{n+1}$ with its canonical metric.
\end{theorem}

Now, we specialize to the case $n=2$. In this dimension, the scalar curvature term on $\Sigma^2$ can be converted into topological information via Gauss--Bonnet, leading to a clean bridge from the general analytic identities to geometric rigidity statements for surfaces.

For closed surfaces in ambient manifolds with nonnegative scalar curvature, we prove the following theorem.

\begin{theorem}\label{T2}
Let $M^3$ be a Riemannian manifold with scalar curvature $S \geq 0$.
Let $\Sigma^2 \subset M^3$ be a closed immersed orientable surface with constant mean curvature $H \geq 2$ and $\lambda_1(J) \geq -2$. Then $\Sigma^2$ has genus $0$ and $|\Sigma^2| \leq 4\pi$.

Furthermore, if equality holds in the area inequality and $\Sigma^2$ bounds a domain, then $\Sigma^2$ is isometric to the unit sphere, and $M^3$ contains a subset isometric to the Euclidean unit ball whose boundary is $\Sigma^2$.
\end{theorem}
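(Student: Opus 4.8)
The plan is to test the variational characterization of $\lambda_1(J)$ against the constant function and then convert the resulting curvature integral into topological data through Gauss--Bonnet. Writing the Jacobi operator as $J = -\Delta - (|A|^2 + \Ric_M(N,N))$, the Rayleigh quotient evaluated at $\phi\equiv 1$ gives $\lambda_1(J)\,|\Sigma| \le -\int_\Sigma (|A|^2 + \Ric_M(N,N))$. The first move is to rewrite the potential using that $M$ is three-dimensional: decomposing $\Ric_M(N,N) = \tfrac{S}{2} - \Sec_M(T\Sigma)$ and invoking the Gauss equation $K_\Sigma = \Sec_M(T\Sigma) + \kappa_1\kappa_2$ yields the pointwise identity
\[
|A|^2 + \Ric_M(N,N) = \frac{|A|^2 + H^2}{2} + \frac{S}{2} - K_\Sigma .
\]

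Next I would integrate this and apply $\int_\Sigma K_\Sigma = 2\pi\chi(\Sigma)$ to arrive at
\[
\lambda_1(J)\,|\Sigma| \le -\tfrac{1}{2}\int_\Sigma(|A|^2 + H^2) - \tfrac{1}{2}\int_\Sigma S + 2\pi\chi(\Sigma).
\]
Then I feed in the three hypotheses: $S\ge 0$ discards the scalar term; the algebraic inequality $|A|^2 \ge \tfrac12 H^2$ (an equality exactly at umbilic points) together with $H\ge 2$ gives $|A|^2 + H^2 \ge \tfrac32 H^2 \ge 6$; and $\lambda_1(J)\ge -2$ bounds the left side below by $-2|\Sigma|$. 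Combining these produces $-2|\Sigma| \le -3|\Sigma| + 2\pi\chi(\Sigma)$, that is $|\Sigma| \le 2\pi\chi(\Sigma) = 4\pi(1-g)$. Since $|\Sigma|>0$, this forces $g=0$ and $|\Sigma|\le 4\pi$.

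For the rigidity, the equality $|\Sigma|=4\pi$ forces every inequality above to hold simultaneously, and I would read these off in turn. Equality in $|A|^2+H^2\ge 6$ gives $H\equiv 2$ and $\Sigma$ totally umbilic, so $\kappa_1=\kappa_2=1$ and $|A|^2\equiv 2$; equality in $\int_\Sigma S\ge 0$ gives $S\equiv 0$ along $\Sigma$; and equality in the Rayleigh quotient makes $\phi\equiv1$ a first eigenfunction, i.e.\ $|A|^2+\Ric_M(N,N)\equiv 2$, whence $\Ric_M(N,N)\equiv 0$ and then $\Sec_M(T\Sigma)\equiv 0$ along $\Sigma$. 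The Gauss equation now gives $K_\Sigma\equiv 1$, so the closed genus-zero surface $\Sigma$ is isometric to the unit sphere.

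For the ambient statement I would use that $\Sigma=\partial\Omega$ is a round unit sphere of constant mean curvature $H=2$ and positive Gaussian curvature bounding the domain $\Omega$ with $S\ge 0$. Its Weyl isometric embedding into $\mathbb{R}^3$ is the standard unit sphere, whose mean curvature is exactly $H_0=2$, so the Brown--York mass $\tfrac{1}{8\pi}\int_\Sigma (H_0-H)\dd A$ vanishes, and the rigidity case of the Shi--Tam positive-mass theorem identifies $\Omega$ with the unit Euclidean ball. The hard part will be precisely this last step: the interior control is only scalar-curvature nonnegativity, so a direct Reilly-type argument (which requires a Ricci lower bound) is unavailable and the rigidity must be routed through the Shi--Tam monotonicity. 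One must also confirm the mean-curvature orientation so that $H>0$ points outward, and that $\Sigma$ is genuinely embedded as $\partial\Omega$ (ensured by the hypothesis that it bounds a domain) so that the filling argument applies.
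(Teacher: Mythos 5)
Your proof is correct and follows essentially the same route as the paper: the constant test function in the Rayleigh quotient, the traced Gauss equation (your pointwise identity is exactly the paper's Schoen--Yau trick), Gauss--Bonnet, and the inequalities $|A|^2\ge \tfrac12 H^2$, $S\ge 0$, $H\ge 2$, $\lambda_1(J)\ge -2$, yielding genus $0$, $|\Sigma^2|\le 4\pi$, and in the equality case a totally umbilical surface with $H=2$, $K_\Sigma\equiv 1$, hence the unit sphere. The only (cosmetic) divergence is the final filling-rigidity step, where you invoke the Shi--Tam positive mass theorem --- precisely the tool the paper uses for its Theorem 1.3 --- whereas for this theorem the paper cites a Hang--Wang rigidity result instead; both appeals are valid, and your explicit attention to the outward-orientation of $H$ is a point the paper glosses over.
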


We also obtain a hyperbolic counterpart.

\begin{theorem}\label{T3}
Let $M^3$ be a Riemannian manifold with scalar curvature $S \geq -6$. Let $\Sigma^2 \subset M^3$ be a closed immersed orientable surface with constant mean curvature $H \geq 2\sqrt{2}$ and $\lambda_1(J) \geq -2$. Then $\Sigma^2$ has genus $0$ and $|\Sigma^2| \leq 4\pi$.

Furthermore, if equality holds in the area inequality and $\Sigma^2$ bounds a domain, then $\Sigma^2$ is isometric to a sphere, and $M^3$ contains a subset isometric to a ball in hyperbolic space $\mathbb{H}^3$.
\end{theorem}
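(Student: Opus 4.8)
The plan is to convert the spectral hypothesis into a single sharp integral inequality and then read off the topology and the area directly from the Gauss equation. Writing the Jacobi operator as $J=\Delta+|A|^2+\Ric_M(N,N)$, its first eigenvalue admits the variational characterization
\[
\lambda_1(J)=\inf_{f\not\equiv 0}\frac{\int_\Sigma |\nabla f|^2-(|A|^2+\Ric_M(N,N))f^2\dd A}{\int_\Sigma f^2\dd A}.
\]
Since $\Sigma$ is closed, the constant function $f\equiv 1$ is admissible, which gives $\lambda_1(J)\le -|\Sigma|^{-1}\int_\Sigma(|A|^2+\Ric_M(N,N))\dd A$; combined with the hypothesis $\lambda_1(J)\ge -2$ this produces the key inequality $\int_\Sigma(|A|^2+\Ric_M(N,N))\dd A\le 2|\Sigma|$. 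First I would rewrite the integrand using the Gauss equation $\Ric_M(N,N)=\tfrac{S}{2}-K_\Sigma+\det A$ and integrate the intrinsic curvature away via Gauss--Bonnet, $\int_\Sigma K_\Sigma\dd A=2\pi\chi(\Sigma)$, reducing the estimate to $\int_\Sigma\big(|A|^2+\det A+\tfrac{S}{2}\big)\dd A\le 2|\Sigma|+2\pi\chi(\Sigma)$.

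Next I would bound the integrand pointwise from below. Using $|A|^2+\det A=\tfrac12(|A|^2+H^2)$ together with $|A|^2\ge \tfrac12 H^2$, the hypotheses $H\ge 2\sqrt2$ and $S\ge -6$ give integrand $\ge \tfrac34 H^2-3\ge 3$, so the left-hand side is at least $3|\Sigma|$. This forces $3|\Sigma|\le 2|\Sigma|+2\pi\chi(\Sigma)$, that is $|\Sigma|\le 2\pi\chi(\Sigma)$. Since $|\Sigma|>0$ we must have $\chi(\Sigma)>0$, so $\Sigma$ has genus $0$, $\chi(\Sigma)=2$, and $|\Sigma|\le 4\pi$, which proves the first assertion.

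For the rigidity statement I would trace back the equalities forced by $|\Sigma|=4\pi$. Equality in the pointwise step makes $\Sigma$ totally umbilic with $\kappa_1=\kappa_2=\sqrt2$ (so $|A|^2=4$, $H=2\sqrt2$) and $S\equiv -6$ along $\Sigma$, while equality in the Rayleigh quotient makes $f\equiv 1$ a first eigenfunction, whence $|A|^2+\Ric_M(N,N)\equiv 2$, giving $\Ric_M(N,N)\equiv -2$ and, through the Gauss equation, $K_\Sigma\equiv 1$. Together with genus $0$ and area $4\pi$ this identifies $\Sigma$ intrinsically with the unit round sphere, exactly matching the data of the geodesic sphere of radius $r_0$ with $\sinh r_0=1$ in $\mathbb{H}^3$ (for which $\coth r_0=\sqrt2$ and the boundary area is $4\pi\sinh^2 r_0=4\pi$).

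The main obstacle is the ambient rigidity: propagating these boundary equalities into the domain $\Omega$ bounded by $\Sigma$ to produce an isometry with a hyperbolic ball, since the bound $S\ge -6$ alone does not control all sectional curvatures. My plan is to flow $\Sigma$ inward through the parallel (or CMC) foliation $\{\Sigma_t\}$ and analyze the Riccati equation for the shape operator along the normal geodesics: the saturated scalar-curvature inequality, the umbilicity at $t=0$, and the trace of the Riccati equation should force each leaf to remain umbilic with $\kappa(t)=\coth(r_0-t)$ and all sectional curvatures to equal $-1$, so that $\Omega$ carries the warped-product metric $\dd r^2+\sinh^2 r\,g_{\mathbb{S}^2}$ of the hyperbolic ball. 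An equivalent route is a Reilly-formula argument on $\Omega$ with the first eigenfunction as boundary data, the conclusion coming from the equality case of Reilly's inequality; in either approach the delicate point is upgrading the integral equality on $\Sigma$ to pointwise curvature rigidity throughout $\Omega$.
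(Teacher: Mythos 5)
Your derivation of the topological and area bounds is correct and is in substance the same as the paper's: the paper packages exactly your computation (test function $u\equiv 1$ in the Rayleigh quotient, the traced Gauss equation $\Ric_M(N,N)=\tfrac{S}{2}-K_\Sigma+\det A$, Gauss--Bonnet, and $|A|^2\ge\tfrac12H^2$) into Theorem~\ref{Estimate1}, which with $a=-6$ and $H\ge 2\sqrt2$ gives $-2\le\lambda_1(J)\le -3+2\pi\chi(\Sigma^2)/|\Sigma^2|$, hence genus $0$ and $|\Sigma^2|\le 4\pi$. Your analysis of the equality data on $\Sigma^2$ (umbilicity with $\kappa_1=\kappa_2=\sqrt2$, $S|_{\Sigma^2}=-6$, $u\equiv 1$ an eigenfunction so $\Ric_M(N,N)=-2$, hence $K_\Sigma\equiv 1$ and $\Sigma^2$ is the unit round sphere) also matches the paper.

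The genuine gap is the ambient rigidity, and both of your proposed completions would fail for the same structural reason: the only ambient hypothesis is a lower bound on the \emph{scalar} curvature, while both tools you name require \emph{Ricci} (or sectional) curvature control. In the Riccati comparison along the inward normal flow, the evolution of the shape operator is governed by $R(\cdot,\partial_t,\cdot,\partial_t)$, and its trace by $\Ric_M(\partial_t,\partial_t)$; the bound $S\ge -6$ gives no pointwise control on these quantities away from $\Sigma^2$, and on $\Sigma^2$ itself the equality case only pins down $\Ric_M(N,N)=-2$ on the surface, with no information in any interior neighborhood, so nothing forces the parallel leaves to remain umbilic or the sectional curvatures to equal $-1$. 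Likewise, Reilly's formula carries the term $\Ric_M(\nabla f,\nabla f)$ and is powerless under a pure scalar-curvature bound. Rigidity from a scalar-curvature lower bound plus boundary data is a genuinely deeper phenomenon: the paper concludes by citing Shi--Tam's theorem (their Theorem 3.8), a positive-mass-type boundary rigidity result proved via quasi-spherical foliations, which says precisely that a compact $3$-manifold with $S\ge -6$ whose boundary carries the data of the geodesic sphere $\sinh r_0=1$ in $\mathbb{H}^3$ (unit round metric, constant mean curvature $2\sqrt2$) must be the corresponding hyperbolic ball. Without invoking a result of this type (or reproving it, which elementary ODE comparison cannot do), the final assertion of the theorem remains unproved in your proposal.
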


For compact free-boundary surfaces (see Section~\ref{jacobi} for the definition), we obtain the following theorem.

\begin{theorem}\label{T1}
Let $M^3$ be a Riemannian manifold with Ricci curvature $\Ric \geq 2$ and convex boundary in the sense that $\II^{\partial M} \geq 0$. Let $\Sigma^2 \subset M^3$ be a compact connected orientable immersed  surface with constant mean curvature and free boundary. If $\lambda_1(J) \geq -2$, then $\Sigma^2$ has genus $0$, a single boundary component, and $|\Sigma^2| \leq 2\pi$.

If equality holds in the area inequality, then $\Sigma^2$ is isometric to $\mathbb{S}^2_+$. Moreover, if the Gaussian curvature of the boundary satisfies $K_{\partial M} \geq 1$, then $M^3$ is isometric to $\mathbb{S}^3_+$.
\end{theorem}

In the case of hypersurfaces with boundary, we also consider the Jacobi--Steklov problem, a Steklov-type problem naturally associated with the Jacobi operator (see Section~\ref{Steklov}). This problem appeared independently in the works of Tran \cite{Tran} and Devyver \cite{Devyver}, where it played an important role in the study of the Morse index of the critical catenoid in the Euclidean ball. Tran also observed that the problem is well defined whenever the Dirichlet problem for $J$ is well posed. This is the case, for instance, under strict stability for the Plateau problem, which is encoded by the positivity of the first Dirichlet eigenvalue of the Jacobi operator, denoted by $\lambda_1^D(J)$.

We denote by $\sigma_1(J)$ the first eigenvalue of the Jacobi--Steklov problem. In Section~\ref{Steklov}, we first establish an estimate in any dimension and characterize the equality case. We then specialize this estimate to dimension two and derive a rigidity theorem for the boundary length.

\begin{theorem}\label{rigidity_n}
Let $(M^{n+1}, g)$ be a compact orientable Riemannian manifold with scalar curvature $S \geq 0$ and boundary mean curvature $H^{\partial M} \geq n$. Let $\Sigma^n \subset M^{n+1}$ be a compact orientable immersed free-boundary hypersurface with constant mean curvature. Assume moreover that $\lambda_1^D(J)>0$. If $\sigma_1(J) \geq -1$, then
\begin{equation}\label{scalarsigma}
2(n-1)|\partial \Sigma|
\leq
\int_{\Sigma} S_\Sigma \, dv + 2\int_{\partial \Sigma} H^{\partial \Sigma, \Sigma} \, da.
\end{equation}
Moreover, equality holds in \eqref{scalarsigma} if and only if $\Sigma$ is totally geodesic, $\Ric(N,N)=0$, $\II^{\partial M}(N,N)=1$, and $S|_{\Sigma}=0$. In particular, $S_\Sigma=0$ and $H^{\partial \Sigma,\Sigma}=n-1$.

Furthermore, assume that $\Sec\ge 0$, $\II^{\partial M}\ge 1$, and that equality holds in \eqref{scalarsigma}. If, in addition, $\partial\Sigma$ is simply connected and $\Sigma$ is properly embedded, then $\Sigma$ is isometric to $\mathbb B^n$ and $M$ is isometric to $\mathbb B^{n+1}$.
\end{theorem}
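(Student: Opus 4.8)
The plan is to run the same scheme used for the closed case of Theorem~\ref{T4}, now testing the variational characterization of $\sigma_1(J)$ against the constant function. First I would record that the Jacobi--Steklov quadratic form associated with a free boundary CMC hypersurface is
\begin{equation*}
Q(u,u)=\int_{\Sigma}\bigl(|\nabla u|^2-(|A|^2+\Ric_M(N,N))u^2\bigr)\dd A-\int_{\partial\Sigma}\II^{\partial M}(N,N)\,u^2\dd a,
\end{equation*}
and that, because $\lambda_1^D(J)>0$, the $J$-harmonic extension minimizes $Q$ among all extensions of given boundary data (Assumption~\ref{ass:coercive}). Consequently $\sigma_1(J)\le Q(u,u)/\int_{\partial\Sigma}u^2\dd a$ for every $u$, in particular for $u\equiv 1$. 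Evaluating and inserting the hypothesis $\sigma_1(J)\ge -1$ yields
\begin{equation*}
\int_{\Sigma}\bigl(|A|^2+\Ric_M(N,N)\bigr)\dd A+\int_{\partial\Sigma}\II^{\partial M}(N,N)\dd a\le|\partial\Sigma^n|.
\end{equation*}

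Next I would convert the bulk integrand using the Gauss equation $2\Ric_M(N,N)=S-S_\Sigma+H^2-|A|^2$, so that $|A|^2+\Ric_M(N,N)=\tfrac12|A|^2+\tfrac12(S-S_\Sigma+H^2)$, and the boundary term using the orthogonal splitting $T(\partial M)=\mathbb R N\oplus T(\partial\Sigma)$ along $\partial\Sigma$, which gives $\II^{\partial M}(N,N)=H^{\partial M}-H^{\partial\Sigma,\Sigma}$ (here I use $\II^{\partial\Sigma,\Sigma}=\II^{\partial M}|_{T\partial\Sigma}$, valid since $\Sigma$ meets $\partial M$ orthogonally). Substituting, then discarding the nonnegative terms $\tfrac12|A|^2$, $\tfrac12H^2$ and $\tfrac12\int_\Sigma S$ (using $S\ge 0$), and bounding $H^{\partial M}\ge n$, rearranges precisely to \eqref{scalarsigma}.

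For the equality case I would trace back every inequality. Equality forces $A\equiv 0$ (so $\Sigma$ is totally geodesic and $H=0$), $S\big|_\Sigma\equiv 0$, and $H^{\partial M}\equiv n$ along $\partial\Sigma$; moreover equality in the Steklov estimate means $u\equiv 1$ is a first Steklov eigenfunction for $\sigma_1(J)=-1$, so $J(1)=|A|^2+\Ric_M(N,N)=0$ gives $\Ric_M(N,N)=0$, while the boundary condition $\partial_\nu 1-\II^{\partial M}(N,N)=-1$ gives $\II^{\partial M}(N,N)=1$. The Gauss equation then returns $S_\Sigma=0$ and the splitting returns $H^{\partial\Sigma,\Sigma}=n-1$, which are the asserted equalities. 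The converse is immediate: these conditions make $u\equiv 1$ a positive solution of the Steklov problem with eigenvalue $-1$, hence (by coercivity, the first eigenfunction having a sign) $\sigma_1(J)=-1$ and \eqref{scalarsigma} is an equality.

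Finally, for the rigidity statement I would first upgrade the intrinsic geometry of $\Sigma$. Since $A\equiv 0$, the Gauss equation identifies the sectional curvatures of $\Sigma$ with those of $M$ on tangent $2$-planes; with $K_M\ge 0$ all sectional curvatures of $\Sigma$ are nonnegative, and as $S_\Sigma=0$ they must all vanish, so $\Sigma$ is flat. Along $\partial\Sigma$ the identity $\II^{\partial\Sigma,\Sigma}=\II^{\partial M}|_{T\partial\Sigma}\ge g|_{\partial\Sigma}$ together with $\tr\II^{\partial\Sigma,\Sigma}=H^{\partial\Sigma,\Sigma}=n-1$ forces every principal curvature of $\partial\Sigma$ in $\Sigma$ to equal $1$; thus $\Sigma$ is a compact flat manifold whose boundary is totally umbilic with principal curvatures $1$, which identifies $\Sigma\cong\mathbb B^{n}$. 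To promote this to $M^{n+1}\cong\mathbb B^{n+1}$ I would invoke the ambient rigidity of Mazet--Mendes \cite{MM23}, feeding in that $\Sigma$ is a totally geodesic free boundary flat ball, that $\II^{\partial M}\ge g|_{\partial M}$ is attained in the normal and tangential directions along $\partial\Sigma$, and that $K_M\ge 0$ with $\Ric_M(N,N)=0$ and $S\big|_\Sigma=0$. I expect this last step---passing from the rigidity of $\Sigma$ and the boundary convexity to the global identification of $M$---to be the main obstacle, since it is exactly where one leaves the infinitesimal and variational regime and must rely on the ambient structure theorem rather than on the pointwise equalities alone.
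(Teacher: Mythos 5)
Your proposal is correct and follows essentially the route the paper intends for this theorem: testing the Jacobi--Steklov Rayleigh quotient \eqref{eq:sigma1-var-correct} with the constant boundary trace (using $Q[\widehat 1]\le Q[1]$, valid under Assumption~\ref{ass:coercive}), rewriting $Q[1]$ via the Gauss equation \eqref{Gauss} and the free boundary splitting $\II^{\partial M}(N,N)=H^{\partial M}-H^{\partial\Sigma,\Sigma}$, discarding the nonnegative terms, and tracing equalities exactly as in the proof of Theorem~\ref{Estimate3}, with the final ambient rigidity delegated to Mazet--Mendes \cite{MM23}, just as the paper does. Your intermediate upgrades in the rigidity step (flatness of $\Sigma^n$ from $S_\Sigma=0$ with $K_M\ge 0$, and umbilicity of $\partial\Sigma$ in $\Sigma$ with principal curvatures $1$) are correct consequences of the Gauss equation and the trace conditions, and they constitute precisely the infinitesimal-rigidity data that the cited theorem consumes.
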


In dimension two, our boundary-length estimate may be viewed as a Jacobi--Steklov counterpart of the sharp perimeter estimate obtained by Mendes \cite[Theorem 1.4]{Abraao} for index-one free-boundary minimal surfaces. 

\begin{theorem}\label{T5}
Let $M^3$ be a Riemannian manifold with nonempty boundary such that $\Ric \geq 0$ and $\II^{\partial M} \geq 1$. Let $\Sigma^2 \subset M^3$ be a compact orientable immersed free-boundary surface with constant mean curvature. Assume moreover that $\lambda_1^D(J)>0$. If $\sigma_1(J) \geq -1$, then $\Sigma$ has genus zero, exactly one boundary component, and
\[
|\partial\Sigma|\le 2\pi.
\]

Furthermore, if equality holds in the boundary-length estimate, then $\Sigma$ is isometric to the unit disk ${\mathbb D}$. If, in addition, $\Sigma$ is embedded, then $M^3$ is isometric to the Euclidean unit ball ${\mathbb B}^3 \subset \mathbb{R}^3$.
\end{theorem}

In Section~\ref{Yamabe}, we conclude the paper by establishing geometric relations between the eigenvalues $\lambda_1(J)$ and $\sigma_1(J)$ and the Yamabe invariants of manifolds with boundary, inspired by the work of Cai and Galloway \cite{CaiGalloway1}.

%=========================================================
\section{The first eigenvalue of the Jacobi operator: general framework and the dimension-$n$ theorem}\label{jacobi}
%=========================================================

Let $M^{n+1}$ be a connected orientable Riemannian manifold with (possibly empty) smooth boundary, and let $\Sigma^n$ be a compact connected two-sided hypersurface with constant mean curvature immersed in $M^{n+1}$.  We denote by $N$ a global unit normal field along $\Sigma^n$.

If $\partial \Sigma \neq \emptyset$, we assume that $\partial \Sigma$ meets $\partial M$ orthogonally. In such a case, $\Sigma^n$ is a free-boundary hypersurface, and the second variation of area is given by the quadratic form
\begin{equation}\label{eq:indexformula}
\mathcal{I}(u,v)=
-\int_{\Sigma^n}u\,Jv\,dv
+\int_{\partial\Sigma}u\left(\frac{\partial v}{\partial\nu}-\II^{\partial M}(N,N)v\right)da,
\end{equation}
where
\[
J=\Delta+\Ric(N,N)+|A|^2
\]
is the Jacobi operator of $\Sigma^n$ and $A$ is the shape operator of $\Sigma^n$.

We say that $u\in H^1(\Sigma^n)$ is an eigenfunction of the Jacobi operator associated with the eigenvalue $\lambda(J)$ if $u\not\equiv 0$ is a weak solution of
\begin{equation}\label{sff}
\left\{
\begin{array}{ll}
Ju+\lambda(J)u=0,&\text{in }\Sigma^n,\\[1mm]
\dfrac{\partial u}{\partial \nu}=\II^{\partial M}(N,N)u,&\text{on }\partial\Sigma.
\end{array}
\right.
\end{equation}
Equivalently,
\[
\mathcal{I}(u,\phi)=\lambda(J)\langle u,\phi\rangle_{L^2(\Sigma^n)}
\qquad \text{for every }\phi\in H^1(\Sigma^n).
\]

By standard elliptic theory, the eigenvalues of \eqref{sff} form a divergent sequence
\[
\lambda_1(J)<\lambda_2(J)\le \lambda_3(J)\le \cdots \nearrow \infty.
\]
The first eigenvalue admits the variational characterization
\begin{equation}\label{variational}
\lambda_1(J)=
\inf_{u\in H^1(\Sigma^n)\setminus\{0\}}
\frac{\mathcal{I}(u,u)}{\int_{\Sigma^n}u^2\,dv}.
\end{equation}

Although constant mean curvature hypersurfaces arise variationally as critical points of the area functional under a volume constraint, the spectral problem considered in this paper is the unconstrained one associated with the full Jacobi operator. Thus, in the definition of $\lambda_1(J)$ and in the variational characterization \eqref{variational}, no mean-zero condition is imposed on the admissible functions. We refer to Barbosa and B\'erard \cite{BB} for a study of the relation between the spectrum of the volume-preserving problem and that of the usual unconstrained problem; in particular, they show that the two spectra are intertwined. The unconstrained setting is the natural one for our purposes, since the basic estimates below are obtained by testing the Rayleigh quotient with the constant function $u\equiv 1$, which is not admissible in the volume-preserving setting.

When $\partial\Sigma=\emptyset$, all the above statements remain valid after removing the boundary terms.

Tracing the Gauss equation twice, we obtain
\begin{equation}\label{Gauss}
\Ric(N,N)=\frac12\bigl(S-S_\Sigma+H^2-|A|^2\bigr),
\end{equation}
where $H=\tr A$ is the mean curvature of $\Sigma^n$, $S$ is the scalar curvature of $M^{n+1}$, and $S_\Sigma$ is the scalar curvature of $\Sigma^n$.

Taking the constant test function $u\equiv 1$ in \eqref{variational} and using \eqref{Gauss}, we obtain the basic estimate
\begin{equation}\label{eq:basic-general}
\lambda_1(J)\le
-\frac{1}{2|\Sigma^n|}
\int_{\Sigma^n}\bigl(S-S_\Sigma+H^2+|A|^2\bigr)\,dv
-\frac{1}{|\Sigma^n|}\int_{\partial\Sigma}\II^{\partial M}(N,N)\,da.
\end{equation}
This is the starting point for all the estimates in the paper.

The following lemma is well-known among experts, but we include a proof here for completeness.

\begin{lemma}\label{lemma-ab}
Assume there exists $u \in H^{1}(\Sigma^n)\setminus\{0\}$ such that
\[
\mathcal{I}(u,u)=\lambda_1(J)\int_{\Sigma^n}u^2\,dv.
\]
Then $u$ is an eigenfunction associated with $\lambda_1(J)$.
\end{lemma}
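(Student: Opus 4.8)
The plan is to exploit the fact that, by the Rayleigh characterization \eqref{variational}, the quadratic form $\mathcal{I}$ satisfies $\mathcal{I}(v,v) \geq \lambda_1(J) \int_{\Sigma^n} v^2 \, dv$ for every $v \in H^{1}(\Sigma^n)$, so that the functional
\[
F(v) := \mathcal{I}(v,v) - \lambda_1(J) \int_{\Sigma^n} v^2 \, dv
\]
is nonnegative on all of $H^{1}(\Sigma^n)$. The hypothesis states precisely that $F(u) = 0$, so $u$ is a global minimizer of $F$. Since $F$ is a smooth quadratic functional on $H^{1}(\Sigma^n)$, the standard strategy is to show that its first variation at the minimizer $u$ must vanish in every direction $\phi \in H^{1}(\Sigma^n)$, and to read off that the vanishing of this first variation is exactly the weak eigenvalue equation.

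To carry this out, I would fix $\phi \in H^{1}(\Sigma^n)$ and expand, using the symmetry and bilinearity of $\mathcal{I}$,
\[
F(u + t\phi) = F(u) + 2t\left( \mathcal{I}(u,\phi) - \lambda_1(J) \int_{\Sigma^n} u\phi \, dv \right) + t^2 F(\phi), \qquad t \in \mathbb{R}.
\]
Because $F(u) = 0$ and $F(u + t\phi) \geq 0$ for all $t$, the scalar function $t \mapsto F(u+t\phi)$ is a nonnegative parabola vanishing at $t = 0$; hence its derivative at $t=0$ is zero, forcing the coefficient of the linear term to vanish:
\[
\mathcal{I}(u,\phi) = \lambda_1(J) \int_{\Sigma^n} u\phi \, dv.
\]
Since $\phi$ was arbitrary, this is exactly the weak formulation recorded immediately after \eqref{sff}, which says that $u$ is an eigenfunction of $J$ associated with $\lambda_1(J)$.

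The argument is essentially the classical Euler--Lagrange computation, so no serious obstacle is expected. The only points requiring mild care are the symmetry of $\mathcal{I}$, which lets the cross term combine into a single coefficient, and the observation that the inequality $F \geq 0$ holds on the whole space $H^{1}(\Sigma^n)$ rather than merely on $H^{1}(\Sigma^n)\setminus\{0\}$ (both sides vanish at $v=0$), so that no constraint such as $u + t\phi \neq 0$ ever needs to be invoked. Note also that because $\mathcal{I}$ already incorporates the boundary term in \eqref{eq:indexformula}, the same computation covers the free-boundary and the closed cases simultaneously, in accordance with Remark~\ref{noboundary}. Upgrading $u$ from a weak eigenfunction to a classical solution of \eqref{sff} is then a matter of standard elliptic regularity, but is not needed for the statement as phrased.
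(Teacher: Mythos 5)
Your proof is correct and follows essentially the same argument as the paper: both define the nonnegative functional $t \mapsto \mathcal{I}(u+t\phi,u+t\phi) - \lambda_1(J)\int_{\Sigma^n}(u+t\phi)^2\,dv$, observe it vanishes at $t=0$, and conclude that the first-order term (equivalently $f'(0)$) vanishes, yielding the weak eigenvalue equation. Your explicit quadratic expansion and the remark that $F\geq 0$ extends to $v=0$ are just slightly more detailed renderings of the same Euler--Lagrange computation.
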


\begin{proof}
Given $\phi\in H^1(\Sigma^n)$, consider
\[
f(t):=\mathcal{I}(u+t\phi,u+t\phi)-\lambda_1(J)\int_{\Sigma^n}(u+t\phi)^2\,dv,
\qquad t\in\mathbb R.
\]
By the variational characterization of $\lambda_1(J)$, we have $f(t)\ge 0$ for all $t$. Since $f(0)=0$, the point $t=0$ is a minimum, and therefore
\[
0=f'(0)=2\mathcal{I}(u,\phi)-2\lambda_1(J)\int_{\Sigma^n}u\phi\,dv.
\]
Hence
\[
\mathcal{I}(u,\phi)=\lambda_1(J)\int_{\Sigma^n}u\phi\,dv
\qquad \text{for every }\phi\in H^1(\Sigma^n),
\]
which is exactly the weak formulation of \eqref{sff}.
\end{proof}

We now prove the main theorem of the dimension-$n$ part.

\begin{proof}[Proof of Theorem \ref{T4}]
Since $\Sigma^n$ is closed, \eqref{eq:basic-general} reduces to
\begin{equation}\label{eq:T4-start}
\lambda_1(J)\le
-\frac{1}{2|\Sigma^n|}
\int_{\Sigma^n}\bigl(S-S_\Sigma+H^2+|A|^2\bigr)\,dv.
\end{equation}
Using the lower bound $S\ge n(n+1)$ and the inequalities $H^2\ge 0$ and $|A|^2\ge 0$, together with the assumption $\lambda_1(J)\ge -n$, we obtain
\[
-n\le
-\frac{1}{2|\Sigma^n|}
\int_{\Sigma^n}\bigl(n(n+1)-S_\Sigma\bigr)\,dv.
\]
Rearranging gives
\[
n(n-1)|\Sigma^n|\le \int_{\Sigma^n}S_\Sigma\,dv,
\]
which is \eqref{media}.

Assume now that equality holds in \eqref{media}. Then equality must occur at every step above. In particular,
\[
H\equiv 0,
\qquad
|A|^2\equiv 0,
\qquad
S|_{\Sigma^n}\equiv n(n+1).
\]
Thus $\Sigma^n$ is totally geodesic and the scalar curvature of $M^{n+1}$ along $\Sigma^n$ is constant equal to $n(n+1)$.

Moreover, equality in \eqref{eq:T4-start} means that the constant function $u\equiv 1$ realizes the Rayleigh quotient. By Lemma~\ref{lemma-ab}, the constant function is a first eigenfunction. Hence
\[
J1+\lambda_1(J)\cdot 1=0,
\]
that is,
\[
\Ric(N,N)+|A|^2=-\lambda_1(J).
\]
Since equality in \eqref{media} also forces $\lambda_1(J)=-n$ and $|A|^2=0$, we conclude that
\[
\Ric(N,N)=n
\qquad\text{along }\Sigma^n.
\]

Conversely, if $\Sigma^n$ is totally geodesic, $S|_{\Sigma^n}=n(n+1)$, and $\Ric(N,N)=n$, then $1$ is a positive eigenfunction of $J$ with eigenvalue $-n$, hence $\lambda_1(J)=-n$.

Assume now, in addition, that equality holds and $\Sec\ge 1$. Since $\Sigma^n$ is totally geodesic, the Gauss equation implies that every sectional curvature of $\Sigma^n$ is equal to $1$. Thus $(\Sigma^n,g|_{\Sigma^n})$ has constant sectional curvature $1$. If $n$ is even, then, since $\Sigma^n$ is orientable, Synge's theorem implies that $\Sigma^n$ is simply connected; if $n$ is odd, this is assumed. Hence in either case $\Sigma^n$ is isometric to the round sphere $\mathbb S^n$.

In particular, $\Sigma^n$ is embedded and separates $M^{n+1}$ into two connected components, say $\Omega_1$ and $\Omega_2$; see \cite[Theorem 2]{Lawson}. Applying the Hang--Wang theorem \cite[Theorem 2]{HangWang} to each component, we conclude that both $\Omega_1$ and $\Omega_2$ are isometric to the hemisphere $\mathbb S^{n+1}_+$, and therefore $M^{n+1}$ is isometric to $\mathbb S^{n+1}$.
\end{proof}

%=========================================================
\section{The surface case: Gauss--Bonnet and area rigidity}\label{surface}
%=========================================================

We now specialize the general framework of Section~\ref{jacobi} to dimension two. In this case, Gauss--Bonnet converts the scalar curvature of $\Sigma^2$ into topological information, leading to the surface rigidity statements announced in the Introduction. We first treat the closed case and then the free-boundary case.

Throughout this section, $\Sigma^2$ denotes a compact two-sided CMC surface immersed in a $3$-manifold $M^3$. If $\partial\Sigma\neq\emptyset$, we assume that $\Sigma^2$ has free boundary in $M^3$.

Since $S_\Sigma=2K_\Sigma$, identity \eqref{Gauss} becomes
\begin{equation}\label{eq:Gauss-surface}
\Ric(N,N)=\frac12\bigl(S-2K_\Sigma+H^2-|A|^2\bigr).
\end{equation}
If $\partial\Sigma\neq\emptyset$, then along $\partial\Sigma$ we have
\[
\II^{\partial M}(N,N)=H^{\partial M}-\kappa,
\]
where $H^{\partial M}$ is the mean curvature of $\partial M$ in $M^3$ and $\kappa$ is the geodesic curvature of $\partial\Sigma$ in $\Sigma^2$.

Substituting these identities into \eqref{eq:basic-general} and using Gauss--Bonnet, we obtain the following estimate for $\lambda_1(J)$, together with an infinitesimal rigidity statement in the equality case.

\begin{proposition}\label{Estimate1}
Let $M^3$ be a Riemannian manifold with boundary such that
\[
a:=\inf_M S>-\infty,
\qquad
b:=\inf_{\partial M}H^{\partial M}>-\infty.
\]
Let $\Sigma^2\subset M^3$ be a compact two-sided free-boundary surface with constant mean curvature $H$. Then
\begin{equation}\label{eq:surface-estimate-1}
\lambda_1(J)\le
-\frac12\left(a+\frac32H^2\right)
-b\,\frac{|\partial\Sigma|}{|\Sigma|}
+\frac{2\pi\chi(\Sigma)}{|\Sigma|}.
\end{equation}
Moreover, equality holds if and only if:
\begin{enumerate}
\item $\Sigma^2$ is totally umbilical and the geodesic curvature of $\partial\Sigma$ in $\Sigma^2$ is constant and equal to $b$;
\item $S|_\Sigma=a$, $H^{\partial M}|_{\partial\Sigma}=b$, and $K_\Sigma$ is constant;
\item $\Ric(N,N)=-\lambda_1(J)-\dfrac12H^2$ on $\Sigma$ and $\II^{\partial M}(N,N)=0$ along $\partial\Sigma$.
\end{enumerate}
\end{proposition}

\begin{proof}
By \eqref{eq:basic-general}, \eqref{eq:Gauss-surface}, and Gauss--Bonnet,
\begin{align}
\lambda_1(J)
&\le
-\frac{1}{2|\Sigma|}
\int_{\Sigma}\bigl(S+H^2+|A|^2\bigr)\,dv
-\frac{1}{|\Sigma|}\int_{\partial\Sigma}H^{\partial M}\,da
+\frac{2\pi\chi(\Sigma)}{|\Sigma|}. \label{eq:gb-lambda}
\end{align}
Using $S\ge a$, $H^{\partial M}\ge b$, and $|A|^2\ge \frac12H^2$, we deduce
\[
\lambda_1(J)\le
-\frac12\left(a+\frac32H^2\right)
-b\,\frac{|\partial\Sigma|}{|\Sigma|}
+\frac{2\pi\chi(\Sigma)}{|\Sigma|}.
\]

Assume now that equality holds. Then equality must occur in all the inequalities above. In particular,
\[
|A|^2=\frac12H^2,
\]
so $\Sigma$ is totally umbilical, and also
\[
S|_\Sigma=a,
\qquad
H^{\partial M}|_{\partial\Sigma}=b.
\]
Moreover, equality in the Rayleigh quotient with the test function $u\equiv 1$ implies
\[
\lambda_1(J)=\frac{\mathcal I(1,1)}{|\Sigma|}.
\]
By Lemma~\ref{lemma-ab}, the constant function $1$ is a first eigenfunction. Hence
\[
J1+\lambda_1(J)=0
\quad\text{on }\Sigma,
\qquad
\II^{\partial M}(N,N)=0
\quad\text{along }\partial\Sigma.
\]
Since $J1=\Ric(N,N)+|A|^2$, we get
\[
\Ric(N,N)=-\lambda_1(J)-|A|^2=-\lambda_1(J)-\frac12H^2.
\]
Finally, \eqref{eq:Gauss-surface} shows that $K_\Sigma$ is constant. The converse is immediate.
\end{proof}

In the closed case, the boundary terms disappear, and \eqref{eq:surface-estimate-1} reduces to the estimate proved in \cite{BatistaSantos} for constant density.

We now prove Theorems~\ref{T2}, \ref{T3}, and \ref{T1}.

%---------------------------------------------------------
\subsection{Proof of Theorem \ref{T2}}
%---------------------------------------------------------
\begin{proof}
Since $\Sigma$ is closed, Proposition~\ref{Estimate1} gives
\[
\lambda_1(J)\le -\frac12\left(\frac32H^2+a\right)+\frac{2\pi\chi(\Sigma)}{|\Sigma|}.
\]
Using $a=0$, $H\ge 2$, and $\lambda_1(J)\ge -2$, we obtain
\[
-2\le \lambda_1(J)\le -3+\frac{2\pi\chi(\Sigma)}{|\Sigma|}.
\]
Hence $\chi(\Sigma)>0$, so $\Sigma$ has genus $0$, and therefore
\[
-2\le \lambda_1(J)\le -3+\frac{4\pi}{|\Sigma|},
\]
which yields $|\Sigma|\le 4\pi$.

If equality holds, then necessarily
\[
H=2,
\qquad
\chi(\Sigma)=2,
\qquad
|\Sigma|=4\pi.
\]
By the equality case in Proposition~\ref{Estimate1}, $\Sigma$ is totally umbilical and has constant Gaussian curvature $K_\Sigma=1$. Since it has genus $0$, it is isometric to the unit sphere $\mathbb S^2$. 
If $\Sigma$ bounds a domain in $M^3$, the rigidity statement follows from the rigidity theorem for the Euclidean ball proved by Shi--Tam \cite{ShiTam2} and Miao \cite{Miao}; see \cite[Theorem 1.1]{HangWang06}.
\end{proof}

%---------------------------------------------------------
\subsection{Proof of Theorem \ref{T3}}
%---------------------------------------------------------
\begin{proof}
Again, since $\Sigma$ is closed, Proposition~\ref{Estimate1} yields
\[
\lambda_1(J)\le -\frac12\left(\frac32H^2-6\right)+\frac{2\pi\chi(\Sigma)}{|\Sigma|}.
\]
Using $H\ge 2\sqrt2$ and $\lambda_1(J)\ge -2$, we obtain
\[
-2\le \lambda_1(J)\le -3+\frac{2\pi\chi(\Sigma)}{|\Sigma|}.
\]
Thus $\chi(\Sigma)>0$, so $\Sigma$ has genus $0$, and consequently
\[
-2\le \lambda_1(J)\le -3+\frac{4\pi}{|\Sigma|},
\]
which implies $|\Sigma|\le 4\pi$.

If equality holds, then
\[
H=2\sqrt2,
\qquad
\chi(\Sigma)=2,
\qquad
|\Sigma|=4\pi.
\]
By the equality case in Proposition~\ref{Estimate1}, $\Sigma$ is totally umbilical and has constant Gaussian curvature $K_\Sigma=1$, hence it is isometric to $\mathbb S^2$. The rigidity statement for the bounded domain then follows from \cite[Theorem 3.8]{ShiTam}.
\end{proof}

%---------------------------------------------------------
\subsection{Proof of Theorem \ref{T1}}
%---------------------------------------------------------
\begin{proof}
Since $\Ric\ge 2$, the scalar curvature of $M^3$ satisfies $S\ge 6$. Moreover, the convexity assumption $\II^{\partial M}\ge 0$ implies $H^{\partial M}\ge 0$. Using only the weaker bound $|A|^2\ge 0$ in \eqref{eq:gb-lambda}, we obtain
\[
\lambda_1(J)\le -\frac12(H^2+6)+\frac{2\pi\chi(\Sigma)}{|\Sigma|}.
\]
Writing $\chi(\Sigma)=2-2g-r$, where $g$ is the genus and $r$ is the number of boundary components, and using $\lambda_1(J)\ge -2$, we obtain
\[
-2\le \lambda_1(J)\le -3+\frac{2\pi(2-2g-r)}{|\Sigma|}
\le -3+\frac{2\pi}{|\Sigma|}.
\]
Hence $2-2g-r>0$, which forces $g=0$ and $r=1$, and also $|\Sigma|\le 2\pi$.

Assume now that equality holds. Then equality holds throughout the above chain, so in particular
\[
H=0,
\qquad
\chi(\Sigma)=1,
\qquad
|\Sigma|=2\pi.
\]
Moreover, equality in the estimate implies that $\Sigma$ is totally geodesic, $S|_\Sigma=6$, $H^{\partial M}|_{\partial\Sigma}=0$, and $K_\Sigma$ is constant. Since
\[
K_\Sigma=\frac12S|_\Sigma-\Ric(N,N)=1,
\]
the surface $\Sigma$ has constant Gaussian curvature $1$ and totally geodesic boundary. It then follows from \cite[Theorem 4]{HangWang} that $\Sigma$ is isometric to $\mathbb S^2_+$.

Finally, assume in addition that $K_{\partial M}\ge 1$. Since $\partial\Sigma$ is a geodesic circle of length $2\pi$ in $\partial M$, Toponogov's theorem \cite{Toponogov} (see also \cite[Corollary 1]{HangWang}) implies that $\partial M$ is isometric to $\mathbb S^2$. The Hang--Wang theorem \cite[Theorem 2]{HangWang} then shows that $M^3$ is isometric to the hemisphere $\mathbb S^3_+$.
\end{proof}

%=========================================================
\section{The Jacobi--Steklov problem}\label{Steklov}
%=========================================================

In this section we introduce a Steklov-type spectral problem naturally associated with the Jacobi operator of a free-boundary CMC hypersurface. This problem has already appeared in the literature; see, for example, \cite{AntoniaCavalcanteSouza,Devyver,Tran}. Following the same organizational principle as in the previous sections, we first work in general dimension and prove the higher-dimensional result stated in the Introduction. We then specialize to dimension two and derive the corresponding boundary-length estimate.

%---------------------------------------------------------
\subsection{The boundary value problem and the Dirichlet-to-Neumann operator}\label{subsec:DtN-J}
%---------------------------------------------------------

We consider the Steklov-type eigenvalue problem
\begin{equation}\label{eq:Jacobi-Steklov}
\left\{
\begin{array}{rcll}
Ju&=&0&\text{in }\Sigma^n,\\[1mm]
B_0u&=&\sigma\,u&\text{on }\partial\Sigma,
\end{array}
\right.
\end{equation}
where $J$ is the Jacobi operator of $\Sigma^n\subset M^{n+1}$ and
\[
B_0=\partial_\nu-\II^{\partial M}(N,N)
\]
is the boundary operator arising in the second variation of area.

We say that $u\in H^1(\Sigma^n)$ is a \emph{Jacobi--Steklov eigenfunction} associated with $\sigma\in\mathbb R$ if $u\not\equiv 0$ is a weak solution of \eqref{eq:Jacobi-Steklov}. We denote the corresponding eigenvalues by
\[
\sigma_1(J)\le \sigma_2(J)\le \cdots .
\]

Unlike the classical Steklov problem for $\Delta$, the Dirichlet problem for $J$ need not be solvable for arbitrary boundary data. To define the Dirichlet-to-Neumann map without domain restrictions, we assume that the first Dirichlet eigenvalue is positive.

\begin{definition}\label{def:Dirichlet-ground}
Let $\lambda_1^D(J)$ denote the first eigenvalue of the Dirichlet problem
\[
\left\{
\begin{aligned}
Ju+\lambda u&=0&&\text{in }\Sigma^n,\\
u&=0&&\text{on }\partial\Sigma.
\end{aligned}
\right.
\]
\end{definition}

\begin{assumption}\label{ass:coercive}
Throughout this section, we assume that
\begin{equation}\label{eq:Dirichlet-positive}
\lambda_1^D(J)>0.
\end{equation}
\end{assumption}

Under \eqref{eq:Dirichlet-positive}, the Dirichlet quadratic form
\[
Q^D[u]
:=\int_{\Sigma^n}|\nabla u|^2\,dv
-\int_{\Sigma^n}\big(\Ric(N,N)+|A|^2\big)u^2\,dv,
\qquad u\in H_0^1(\Sigma^n),
\]
is coercive. In particular, the Dirichlet problem for $J$ is well posed for every boundary trace. Geometrically, \eqref{eq:Dirichlet-positive} means that $\Sigma^n$ is strictly stable for the Plateau problem.

The following well-posedness statement is standard; for completeness, we refer the reader to \cite{AntoniaCavalcanteSouza}.

\begin{proposition}\label{prop:wellposed}
Assume \eqref{eq:Dirichlet-positive}. Given $h\in H^{1/2}(\partial\Sigma)$, set
\[
\mathcal A_h:=\{u\in H^1(\Sigma^n): u|_{\partial\Sigma}=h\}.
\]
Then there exists a unique $\widehat h\in \mathcal A_h$ minimizing $Q^D$ on $\mathcal A_h$.
Moreover, $\widehat h$ is the unique weak solution of $Ju=0$ in $\Sigma^n$ with trace $h$.
\end{proposition}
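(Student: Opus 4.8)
The plan is to establish well-posedness of the Dirichlet problem for the Jacobi operator under the coercivity assumption \eqref{eq:Dirichlet-positive} via the direct method in the calculus of variations. First I would observe that the quadratic form
\[
Q^D[u]=\int_{\Sigma^n}|\nabla u|^2-\int_{\Sigma^n}\big(\Ric_M(N,N)+|A|^2\big)u^2
\]
is exactly the form associated with the Dirichlet eigenvalue problem of Definition~\ref{def:Dirichlet-ground}, so that the variational characterization gives
\[
\lambda_1^D(J)=\inf_{u\in H_0^1(\Sigma^n)\setminus\{0\}}\frac{Q^D[u]}{\int_{\Sigma^n}u^2\,dv}.
\]
Consequently $Q^D[u]\ge \lambda_1^D(J)\int_{\Sigma^n}u^2\,dv$ for every $u\in H_0^1(\Sigma^n)$, and since $\lambda_1^D(J)>0$ this already shows $Q^D$ is positive on $H_0^1$. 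To obtain genuine coercivity in the full $H^1$-norm I would combine this Poincaré-type lower bound with the gradient term and the compactness of the embedding $H^1(\Sigma^n)\hookrightarrow L^2(\Sigma^n)$, yielding a constant $c>0$ with $Q^D[u]\ge c\|u\|_{H^1}^2$ on $H_0^1(\Sigma^n)$; this is the standard ``positivity of the ground state eigenvalue implies coercivity'' argument.

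Next I would reduce the inhomogeneous Dirichlet problem to a homogeneous one. Given $h\in H^{1/2}(\partial\Sigma)$, pick any fixed extension $E h\in H^1(\Sigma^n)$ with trace $h$; then minimizing $Q^D$ over the affine space $\mathcal A_h$ is equivalent to minimizing the strictly convex functional $w\mapsto Q^D[Eh+w]$ over $w\in H_0^1(\Sigma^n)$. Expanding, this functional is the sum of the coercive quadratic form $Q^D[w]$, a bounded linear term in $w$, and a constant, so it is bounded below and coercive on $H_0^1$. The direct method—take a minimizing sequence, extract a weakly convergent subsequence using coercivity, and pass to the limit using weak lower semicontinuity of $Q^D$ (the gradient term is weakly lower semicontinuous and the zeroth-order term converges by the compact embedding)—produces a minimizer $w_0$, and $\widehat h:=Eh+w_0$ is the desired minimizer of $Q^D$ on $\mathcal A_h$. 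Strict convexity, equivalently strict positivity of $Q^D$ on $H_0^1$, gives uniqueness of $w_0$ and hence of $\widehat h$.

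Finally I would identify the minimizer with the unique weak solution of $Ju=0$ with trace $h$. Computing the first variation, for every $\phi\in H_0^1(\Sigma^n)$ the condition $\frac{d}{dt}\big|_{t=0}Q^D[\widehat h+t\phi]=0$ gives the Euler--Lagrange equation
\[
\int_{\Sigma^n}\langle\nabla\widehat h,\nabla\phi\rangle\,dv
-\int_{\Sigma^n}\big(\Ric_M(N,N)+|A|^2\big)\widehat h\,\phi\,dv=0,
\]
which is precisely the weak formulation of $J\widehat h=0$ with test functions vanishing on the boundary; uniqueness of a weak solution with given trace again follows from coercivity, since the difference of two solutions lies in $H_0^1$ and is $Q^D$-orthogonal to all of $H_0^1$, forcing it to vanish. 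I expect the only genuinely delicate point to be the passage from positivity of $\lambda_1^D(J)$ to full $H^1$-coercivity of $Q^D$: this requires the compactness of the trace-zero embedding (so that the eigenvalue infimum is attained and the Poincaré inequality can be upgraded to control the full norm), which is where the compactness of $\Sigma^n$ is used; all remaining steps are routine applications of the direct method and integration by parts. Since the paper cites \cite{AntoniaCavalcanteSouza} for the detailed argument, I would state the result and indicate this reference rather than reproduce every estimate.
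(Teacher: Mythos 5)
Your proof is correct. A point of comparison: the paper itself contains no argument for this proposition --- it labels the statement standard and defers entirely to \cite{AntoniaCavalcanteSouza} --- so your direct-method proof supplies precisely the content that citation stands in for, and it is the expected argument: variational characterization of $\lambda_1^D(J)$, coercivity of $Q^D$ on $H_0^1(\Sigma^n)$, reduction of the inhomogeneous problem to the affine space $Eh+H_0^1(\Sigma^n)$ via a trace extension, the direct method with weak lower semicontinuity, strict convexity for uniqueness, and the Euler--Lagrange identification of the minimizer with the weak solution of $J\widehat h=0$. The one step you flag as delicate --- upgrading $Q^D[u]\ge\lambda_1^D(J)\|u\|_{L^2}^2$ to full $H^1$-coercivity --- can in fact be done without invoking the compact embedding at all: since the potential $V=\Ric_M(N,N)+|A|^2$ is bounded on the compact hypersurface $\Sigma^n$, one interpolates, for $\epsilon\in(0,1)$,
\[
Q^D[u]=\epsilon Q^D[u]+(1-\epsilon)Q^D[u]
\ \ge\
\epsilon\|\nabla u\|_{L^2}^2+\Big((1-\epsilon)\lambda_1^D(J)-\epsilon\|V\|_{\infty}\Big)\|u\|_{L^2}^2,
\]
and choosing $\epsilon\le \lambda_1^D(J)/\big(2(\lambda_1^D(J)+\|V\|_{\infty})\big)$ gives $Q^D[u]\ge c\|u\|_{H^1}^2$ with $c=\min\{\epsilon,\tfrac12\lambda_1^D(J)\}$. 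Compactness (Rellich) is still genuinely needed, but only where you use it elsewhere: for the convergence of the zeroth-order term along the minimizing sequence, and implicitly to know the Dirichlet spectrum in Definition~\ref{def:Dirichlet-ground} is discrete so that $\lambda_1^D(J)$ and its Rayleigh characterization are well defined. With that small refinement, your argument is a complete and self-contained proof of the proposition.
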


We define the Dirichlet-to-Neumann operator for $J$ by
\[
\Lambda_J(h):=\left.\frac{\partial \widehat h}{\partial \nu}\right|_{\partial\Sigma},
\]
where $\widehat h$ is the $J$-harmonic extension given by Proposition~\ref{prop:wellposed}. Then \eqref{eq:Jacobi-Steklov} is equivalent to the boundary spectral problem
\[
\big(\Lambda_J-\II^{\partial M}(N,N)\big)h=\sigma\,h
\qquad\text{on }\partial\Sigma,
\qquad h=u|_{\partial\Sigma}.
\]
In particular, under \eqref{eq:Dirichlet-positive}, the Jacobi--Steklov spectrum is discrete and real.

Introduce the quadratic form
\begin{equation}\label{eq:Q-JS}
Q[u]
:=
\int_{\Sigma^n} |\nabla u|^2\,dv
-\int_{\Sigma^n}\big(\Ric(N,N)+|A|^2\big)u^2\,dv
-\int_{\partial\Sigma}\II^{\partial M}(N,N)\,u^2\,da.
\end{equation}
If $Ju=0$ in $\Sigma^n$, then integration by parts yields
\begin{equation}\label{eq:Q-boundary-identity}
Q[u]
=\int_{\partial\Sigma}u\Big(\frac{\partial u}{\partial \nu}-\II^{\partial M}(N,N)\,u\Big)da
=\int_{\partial\Sigma}u\,B_0u \,da.
\end{equation}
Hence, for a Jacobi--Steklov eigenfunction, $Q[u]=\sigma\int_{\partial\Sigma}u^2\,da$.

By minimizing over boundary traces and using Proposition~\ref{prop:wellposed}, we obtain
\begin{equation}\label{eq:sigma1-var-correct}
\sigma_1(J)
=
\inf_{h\in H^{1/2}(\partial\Sigma)\setminus\{0\}}
\frac{Q[\widehat h]}{\int_{\partial\Sigma}h^2\,da},
\end{equation}
where $\widehat h$ is the $J$-harmonic extension of $h$.

%---------------------------------------------------------
\subsection{A two-dimensional estimate for \texorpdfstring{$\sigma_1(J)$}{sigma1(J)}}\label{subsec:sigma1-surface-estimate}
%---------------------------------------------------------

Before proving the general rigidity statement, we record the following estimate in dimension two, which is also of independent interest.

\begin{theorem}\label{Estimate3}
Let $M^3$ be a Riemannian manifold with boundary such that
\[
a:=\inf_M S
\qquad\text{and}\qquad
b:=\inf_{\partial M} H^{\partial M}
\]
are finite. Let $\Sigma^2\subset M^3$ be a compact two-sided free-boundary surface with constant mean curvature $H$. Assume moreover that $\lambda_1^D(J)>0$. Then
\begin{equation}\label{sigma1}
\sigma_1(J)
\le
-\left(\frac{a}{2}+\frac{3}{4}H^2\right)\frac{|\Sigma|}{|\partial\Sigma|}
+\frac{2\pi\chi(\Sigma)}{|\partial\Sigma|}
-b.
\end{equation}
Moreover, equality holds in \eqref{sigma1} if and only if the following properties are satisfied:
\begin{enumerate}
\item $\Sigma$ is totally umbilical and has constant Gaussian curvature;
\item $S|_{\Sigma}=a$ and $H^{\partial M}|_{\partial\Sigma}=b$;
\item $\Ric(N,N)=-\dfrac{1}{2}H^2$ along $\Sigma$ and
\[
\II^{\partial M}(N,N)=-\sigma_1(J)
\qquad\text{along }\partial\Sigma.
\]
\end{enumerate}
\end{theorem}

\begin{proof}
Let $h\equiv 1$ on $\partial\Sigma$, and let $\widehat 1$ be its $J$-harmonic extension. By \eqref{eq:sigma1-var-correct},
\[
\sigma_1(J)\le \frac{Q[\widehat 1]}{|\partial\Sigma|}.
\]
Since $\widehat 1$ minimizes $Q^D$ among all functions with boundary value $1$, we have
\[
Q[\widehat 1]\le Q[1].
\]
Therefore,
\begin{equation}\label{eq:sigma1Q1-Estimate3}
\sigma_1(J)\le \frac{Q[1]}{|\partial\Sigma|}.
\end{equation}

Now,
\begin{align*}
Q[1]
&=
-\int_\Sigma \big(\Ric(N,N)+|A|^2\big)\,dv
-\int_{\partial\Sigma}\II^{\partial M}(N,N)\,da.
\end{align*}
Using the Gauss equation
\[
\Ric(N,N)+|A|^2
=
\frac12\big(S-2K_\Sigma+H^2+|A|^2\big),
\]
together with the free-boundary identity
\[
\II^{\partial M}(N,N)=H^{\partial M}-\kappa,
\]
we obtain
\begin{align*}
Q[1]
&=
-\frac12\int_\Sigma \big(S-2K_\Sigma+H^2+|A|^2\big)\,dv
-\int_{\partial\Sigma}\big(H^{\partial M}-\kappa\big)\,da \\
&=
-\frac12\int_\Sigma \big(S+H^2+|A|^2\big)\,dv
+\int_\Sigma K_\Sigma\,dv
-\int_{\partial\Sigma}H^{\partial M}\,da
+\int_{\partial\Sigma}\kappa\,da.
\end{align*}
Since $|A|^2\ge \dfrac{H^2}{2}$, $S\ge a$, and $H^{\partial M}\ge b$, Gauss--Bonnet yields
\begin{align*}
Q[1]
&\le
-\frac12\left(a+\frac32 H^2\right)|\Sigma|
+2\pi\chi(\Sigma)
-b|\partial\Sigma|.
\end{align*}
Combining this with \eqref{eq:sigma1Q1-Estimate3}, we obtain \eqref{sigma1}.

Assume now that equality holds in \eqref{sigma1}. Then equality must occur at every step above. In particular:
\begin{itemize}
\item $Q[\widehat1]=Q[1]$, hence $\widehat1\equiv 1$ by uniqueness of the $J$-harmonic extension with boundary value $1$;
\item $|A|^2=\dfrac{H^2}{2}$ on $\Sigma$, so $\Sigma$ is totally umbilical;
\item $S|_\Sigma=a$ and $H^{\partial M}|_{\partial\Sigma}=b$;
\item $K_\Sigma$ is constant, because the Gauss equation then gives
\[
K_\Sigma
=
\frac12\big(S+H^2-|A|^2\big)-\Ric(N,N)
=
\frac12\left(a+\frac12 H^2\right)-\Ric(N,N),
\]
and, as we now show, $\Ric(N,N)$ is constant as well.
\end{itemize}

Since $\widehat1\equiv 1$ is a Jacobi--Steklov eigenfunction associated with $\sigma_1(J)$, the boundary value problem \eqref{eq:Jacobi-Steklov} gives
\[
J1=0 \quad\text{in }\Sigma,
\qquad
B_0 1=\sigma_1(J)\quad\text{on }\partial\Sigma.
\]
Thus
\[
\Ric(N,N)+|A|^2=0
\quad\text{on }\Sigma
\]
and
\[
-\II^{\partial M}(N,N)=\sigma_1(J)
\quad\text{on }\partial\Sigma.
\]
Since $|A|^2=\dfrac{H^2}{2}$, we conclude that
\[
\Ric(N,N)=-\frac12 H^2
\quad\text{on }\Sigma.
\]
Substituting this into the Gauss equation, we obtain
\[
K_\Sigma=\frac12\left(a+\frac32 H^2\right),
\]
which is constant. This proves (1)--(3). The converse is immediate, since under (1)--(3) the function $1$ is a Jacobi--Steklov eigenfunction and all inequalities above become equalities.
\end{proof}

%---------------------------------------------------------
\subsection{The dimension-$n$ estimate and rigidity}\label{subsec:sigma1-general}
%---------------------------------------------------------

We now prove the higher-dimensional estimate announced in the Introduction.

\begin{proof}[Proof of Theorem \ref{rigidity_n}]
Let $h\equiv 1$ on $\partial\Sigma$ and let $\widehat 1$ be its $J$-harmonic extension. By \eqref{eq:sigma1-var-correct},
\[
\sigma_1(J)\le \frac{Q[\widehat 1]}{|\partial\Sigma|}.
\]
Since $\widehat 1$ minimizes the Dirichlet energy among all extensions with trace $1$, we have
\[
Q[\widehat 1]\le Q[1].
\]
Therefore,
\begin{equation}\label{eq:sigma-general-byQ1}
\sigma_1(J)\le \frac{Q[1]}{|\partial\Sigma|}.
\end{equation}

Using \eqref{Gauss} and the free-boundary identity
\[
\II^{\partial M}(N,N)=H^{\partial M}-H^{\partial\Sigma,\Sigma},
\]
we compute
\begin{align*}
Q[1]
&=
-\int_{\Sigma}\bigl(\Ric(N,N)+|A|^2\bigr)\,dv
-\int_{\partial\Sigma}\II^{\partial M}(N,N)\,da \\
&=
-\frac12\int_{\Sigma}\bigl(S-S_\Sigma+H^2+|A|^2\bigr)\,dv
-\int_{\partial\Sigma}\bigl(H^{\partial M}-H^{\partial\Sigma,\Sigma}\bigr)\,da \\
&=
\frac12\int_{\Sigma}S_\Sigma\,dv
+\int_{\partial\Sigma}H^{\partial\Sigma,\Sigma}\,da
-\frac12\int_{\Sigma}\bigl(S+H^2+|A|^2\bigr)\,dv
-\int_{\partial\Sigma}H^{\partial M}\,da.
\end{align*}
Since $S\ge 0$, $H^{\partial M}\ge n$, $H^2\ge 0$, and $|A|^2\ge 0$, we obtain
\[
Q[1]
\le
\frac12\int_{\Sigma}S_\Sigma\,dv
+\int_{\partial\Sigma}H^{\partial\Sigma,\Sigma}\,da
-n|\partial\Sigma|.
\]
Combining this with \eqref{eq:sigma-general-byQ1} and the assumption $\sigma_1(J)\ge -1$, we get
\[
-|\partial\Sigma|
\le
\frac{1}{2}\int_{\Sigma}S_\Sigma\,dv
+\int_{\partial\Sigma}H^{\partial\Sigma,\Sigma}\,da
-n|\partial\Sigma|,
\]
which is equivalent to
\[
2(n-1)|\partial\Sigma|
\le
\int_{\Sigma}S_\Sigma\,dv+2\int_{\partial\Sigma}H^{\partial\Sigma,\Sigma}\,da.
\]

Assume now that equality holds. Then equality must occur in all the inequalities above. In particular,
\[
H\equiv 0,\qquad |A|^2\equiv 0,\qquad S|_{\Sigma}\equiv 0,\qquad H^{\partial M}|_{\partial\Sigma}\equiv n.
\]
Thus $\Sigma$ is totally geodesic. Moreover, equality in \eqref{eq:sigma-general-byQ1} implies that $Q[\widehat1]=Q[1]$, hence $\widehat1\equiv 1$ by uniqueness of the $J$-harmonic extension with boundary value $1$. Therefore
\[
J1=0\quad\text{on }\Sigma,
\qquad
B_01=\sigma_1(J)\quad\text{on }\partial\Sigma.
\]
Since equality also forces $\sigma_1(J)=-1$, we obtain
\[
\Ric(N,N)+|A|^2=0,
\qquad
\II^{\partial M}(N,N)=1.
\]
Because $|A|^2=0$, we get $\Ric(N,N)=0$. Moreover, because $\Sigma$ is totally geodesic and $S|_\Sigma=0$, the Gauss equation yields
\[
S_\Sigma = S - 2\Ric(N,N) + H^2 - |A|^2 = 0.
\]

Finally, using again the free-boundary condition,
\[
H^{\partial\Sigma,\Sigma}
=
H^{\partial M}-\II^{\partial M}(N,N)
=
n-1.
\]
This proves the equality statement.

Assume now, in addition, that $\Sec\ge 0$, $\II^{\partial M}\ge 1$, and that equality holds in \eqref{scalarsigma}. Since $\Sigma$ is totally geodesic, the Gauss equation implies that $\Sec_\Sigma\ge 0$. As $S_\Sigma=0$, it follows that every sectional curvature of $\Sigma$ vanishes. Hence $\Sigma$ is flat.

Moreover, along $\partial\Sigma$, the inequality $\II^{\partial M}\ge 1$ implies that each principal curvature of $\partial\Sigma\subset\Sigma$ is at least $1$. Since
\[
H^{\partial\Sigma,\Sigma}=n-1,
\]
all these principal curvatures must be equal to $1$. Therefore,
$\II^{\partial\Sigma,\Sigma}=g_{\partial\Sigma}.$
Applying the Gauss equation to the hypersurface $\partial\Sigma\subset\Sigma$, we conclude that every sectional curvature of $\partial\Sigma$ is equal to $1$. Since $\partial\Sigma$ is simply connected, it follows that $\partial\Sigma$ is isometric to $\mathbb S^{n-1}$.

We may now apply Xia's theorem \cite[Theorem 1]{Xia} to conclude that $\Sigma$ is isometric to the Euclidean unit ball $\mathbb B^n$.

Assume now that $\Sigma$ is properly embedded. Since equality implies that $\Sigma$ is minimal we can use a result of Fraser and Li \cite[Corollary 2.10]{FraserLi} to show that $\Sigma$ separates $M$ into two connected components,
\[
M\setminus\Sigma=M_+\cup M_-.
\]
For each component $M_\pm$, the boundary is made up of $\Sigma$ and a smooth piece $\Gamma_\pm\subset\partial M$, meeting orthogonally along $\partial\Sigma$. Since $\Ric\ge 0$, the mean curvature $H_{\Gamma_\pm}\ge n$, and $\Sigma$ is totally geodesic and isometric to $\mathbb B^n$, all the hypotheses of the Mazet--Mendes half-ball rigidity theorem \cite[Theorem 5.1]{MM23} are satisfied. Therefore each $M_\pm$ is isometric to the Euclidean unit half-ball $\mathbb B^{n+1}_+$, and consequently $M$ is isometric to the Euclidean unit ball $\mathbb B^{n+1}$.

\end{proof}

%---------------------------------------------------------
\subsection{The surface consequence}\label{subsec:sigma1-surface}
%---------------------------------------------------------

The two-dimensional estimate is the specialization of Theorem~\ref{rigidity_n} to $n=2$, together with Gauss--Bonnet.

\begin{proof}[Proof of Theorem \ref{T5}]
Applying Theorem~\ref{rigidity_n} with $n=2$, we obtain
\[
2|\partial\Sigma|
\le
\int_{\Sigma}S_\Sigma\,dv+2\int_{\partial\Sigma}H^{\partial\Sigma,\Sigma}\,da.
\]
Since $S_\Sigma=2K_\Sigma$, Gauss--Bonnet gives
\[
\int_{\Sigma}S_\Sigma\,dv+2\int_{\partial\Sigma}H^{\partial\Sigma,\Sigma}\,da
=
2\int_{\Sigma}K_\Sigma\,dv+2\int_{\partial\Sigma}\kappa\,da
=
4\pi\chi(\Sigma),
\]
hence
\[
|\partial\Sigma|
\le
2\pi\chi(\Sigma)
=
2\pi(2-2g-r),
\]
where $g$ is the genus and $r$ is the number of boundary components. Since the left-hand side is positive, it follows that $g=0$ and $r=1$. Consequently,
\[
|\partial\Sigma|\le 2\pi.
\]

Assume now that equality holds in the boundary-length inequality. Then equality holds in the estimate above, hence equality holds in Theorem~\ref{rigidity_n}. Therefore,
\[
\Sigma \text{ is totally geodesic},\qquad
K_\Sigma=0,\qquad
\kappa=H^{\partial\Sigma,\Sigma}=1.
\]
Since $\Sigma$ is topologically a disk, it follows that $\Sigma$ is isometric to the Euclidean unit disk ${\mathbb D}$. 
The ambient rigidity follows as in the proof of Theorem~\ref{rigidity_n}.
\end{proof}

%=========================================================
\section{Estimates involving the Yamabe invariants} \label{Yamabe}
%=========================================================

It is noteworthy that, for hypersurfaces with boundary, the first eigenvalue of the Jacobi operator, $\lambda_1(J)$, and the first eigenvalue of the Jacobi--Steklov problem, $\sigma_1(J)$, are related to the Yamabe invariants. These geometric invariants were introduced by Escobar in \cite{E92,Escobar92} and play a central role in the Yamabe problem with boundary.

Let $(\Sigma^n, g)$ be a compact Riemannian manifold with boundary $\partial \Sigma$ and dimension $n \geq 3$. Following Escobar \cite{Escobar92}, we define  the functional
\begin{eqnarray*}
 Q_g (\varphi) =
 \dfrac{\int_{\Sigma^n} \left(a_n|\nabla \varphi|^2 + S_\Sigma \varphi^2 \right)dv + 2\int_{\partial \Sigma}  H^{\partial\Sigma, \Sigma} \varphi^2 da }
 {\left(\int_{\Sigma^n} |\varphi|^{\frac{2n}{n-2}}dv\right)^{\frac{n-2}{n}}},
\end{eqnarray*}
where $\varphi$ is a nonzero smooth function on $\Sigma^n$, and $a_n = \frac{4(n-1)}{n-2}$.

The first \emph{Yamabe constant for manifolds with boundary} is defined by
\begin{eqnarray*}
 Q_g (\Sigma) = 
 \inf \left\{Q_g (\varphi): \varphi \in C^{\infty}(\Sigma)\setminus \{0\}\right\}.
\end{eqnarray*}
It is not difficult to see that $Q_g (\Sigma)$ is invariant under conformal changes, and therefore, depends only on the conformal class $[g]$ of $g$ (cf. \cite{Escobar92}). Moreover,
\[
Q_g(\Sigma) \leq Q_g(\mathbb{S}^n_+).
\]

The first \emph{Yamabe invariant of} $\Sigma^n$ is
\begin{eqnarray*}
 \sigma ( \Sigma) = \sup_{ [g] \in \mathcal{C}(\Sigma) }  Q_g (\Sigma),
\end{eqnarray*}
where $\mathcal{C} (\Sigma)$ is the space of conformal classes on $\Sigma^n$.

Armed with this invariant, we obtain the following boundary analogue of \cite[Theorem 6]{CaiGalloway1}.

\begin{theorem}\label{202} Let $(M^{n+1}, g)$ be an orientable Riemannian manifold with boundary, $n\geq 3$, with scalar curvature $S\geq a$, and mean-convex boundary, $H^{\partial M}\geq 0$. Let $\Sigma^{n} \subset M^{n+1}$ be a compact orientable hypersurface with constant mean curvature $H$ and free boundary. If $\sigma(\Sigma)<0$, then $2\lambda_1(J)+a+H^2<0$ and \begin{equation}\label{201} \left| \Sigma \right| \geq \left(\frac{\sigma(\Sigma)}{2\lambda_1(J)+a+H^2}\right)^{\frac{n}{2}}. \end{equation} Moreover, equality in \eqref{201} holds if and only if $\sigma(\Sigma)=Q_g(1)$, $\Sigma^n$ is totally geodesic in $M^{n+1}$, $S\big|_{\Sigma^n}=a$, $H^{\partial M}\big|_{\partial\Sigma}=0$, $\mathrm{II}^{\partial M}(N,N)\big|_{\partial\Sigma}=0$, and $\Ric(N,N)=-\lambda_1(J)$. In particular, $S_\Sigma=a+2\lambda_1(J)<0$, $H^{\partial\Sigma,\Sigma}=0$, and $(\Sigma^n, g\big|_{\Sigma^n})$ is Einstein with totally geodesic boundary. 
\end{theorem}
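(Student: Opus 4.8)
Theorem \ref{202} — proof proposal.

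The plan is to compare the Yamabe numerator of an arbitrary test function with the Jacobi quadratic form $\mathcal I$, and then to run a Hölder argument in the negative Yamabe regime. Writing $N_g(\varphi)$ for the numerator of $Q_g(\varphi)$ and recalling $\mathcal I(\varphi,\varphi)=\int_{\Sigma}|\nabla\varphi|^2-\int_{\Sigma}(\Ric_M(N,N)+|A|^2)\varphi^2-\int_{\partial\Sigma}\II^{\partial M}(N,N)\varphi^2$, I would substitute the Schoen--Yau identity \eqref{Gauss} together with the free-boundary relation $\II^{\partial M}(N,N)=H^{\partial M}-H^{\partial\Sigma,\Sigma}$. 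This relation holds because at a free-boundary point the conormal $\nu$ of $\partial\Sigma$ in $\Sigma$ coincides, up to sign, with the unit normal of $\partial M$, and $T(\partial M)=T(\partial\Sigma)\oplus\mathbb R N$. Using $a_n=\frac{4(n-1)}{n-2}$ and the identity $1-\tfrac{a_n}{2}=-\tfrac{n}{n-2}$, this yields
\[
N_g(\varphi)=2\mathcal I(\varphi,\varphi)+\frac{2n}{n-2}\int_{\Sigma}|\nabla\varphi|^2+\int_{\Sigma}\big(S+H^2+|A|^2\big)\varphi^2+2\int_{\partial\Sigma}H^{\partial M}\varphi^2.
\]
Bounding $\mathcal I(\varphi,\varphi)\ge\lambda_1(J)\int_{\Sigma}\varphi^2$ through \eqref{variational}, discarding the nonnegative gradient term, and inserting $S\ge a$, $|A|^2\ge 0$, $H^{\partial M}\ge 0$, I obtain the estimate $N_g(\varphi)\ge(2\lambda_1(J)+a+H^2)\int_{\Sigma}\varphi^2$, valid for every $\varphi$.

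Next I would extract the sign condition. Since $[g]$ is one of the competing conformal classes, $Q_g(\Sigma^n)\le\sigma(\Sigma^n)<0$; as $Q_g(\Sigma^n)=\inf_\varphi Q_g(\varphi)$, the bound on $N_g$ forces $2\lambda_1(J)+a+H^2<0$, for otherwise $N_g\ge 0$ and hence $Q_g(\Sigma^n)\ge 0$. With this negativity in hand I would apply the Hölder inequality $\int_{\Sigma}\varphi^2\le\big(\int_{\Sigma}|\varphi|^{2n/(n-2)}\big)^{(n-2)/n}|\Sigma^n|^{2/n}$; dividing $N_g(\varphi)$ by the $L^{2n/(n-2)}$ denominator and using that the coefficient $2\lambda_1(J)+a+H^2$ is negative reverses the inequality and gives $Q_g(\varphi)\ge(2\lambda_1(J)+a+H^2)|\Sigma^n|^{2/n}$ for all $\varphi$. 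Passing to the infimum and then using $\sigma(\Sigma^n)\ge Q_g(\Sigma^n)$ produces $\sigma(\Sigma^n)\ge(2\lambda_1(J)+a+H^2)|\Sigma^n|^{2/n}$, which rearranges to \eqref{201} after dividing by the negative quantity $2\lambda_1(J)+a+H^2$.

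For the rigidity, equality in \eqref{201} forces equality throughout the chain $\sigma(\Sigma^n)\ge Q_g(\Sigma^n)\ge(2\lambda_1(J)+a+H^2)|\Sigma^n|^{2/n}$. The second equality is attained by the negative Yamabe minimizer $\varphi_0>0$ (which exists since $Q_g(\Sigma^n)<0$); the Hölder equality case forces $\varphi_0$ constant, so after scaling $\varphi_0\equiv 1$ and $Q_g(1)=Q_g(\Sigma^n)=\sigma(\Sigma^n)$. Tracing back the remaining equalities gives $|A|^2=0$ (so $\Sigma^n$ is totally geodesic and $H=0$), $S|_{\Sigma^n}=a$, $H^{\partial M}|_{\partial\Sigma}=0$, and $\mathcal I(1,1)=\lambda_1(J)|\Sigma^n|$; Lemma~\ref{lemma-ab} then makes the constant a first eigenfunction, whence the eigenvalue equation yields $\Ric_M(N,N)=-\lambda_1(J)$ and the Robin condition in \eqref{sff} yields $\II^{\partial M}(N,N)=0$. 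The Gauss equation then gives $S_\Sigma=a+2\lambda_1(J)<0$ and $H^{\partial\Sigma,\Sigma}=H^{\partial M}-\II^{\partial M}(N,N)=0$. The converse is a direct computation: under these conditions $Q_g(1)=(a+2\lambda_1(J))|\Sigma^n|^{2/n}=(2\lambda_1(J)+a+H^2)|\Sigma^n|^{2/n}$, and $\sigma(\Sigma^n)=Q_g(1)$ gives equality in \eqref{201}.

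The \emph{delicate point} is the Einstein conclusion. Once equality holds, $g|_{\Sigma^n}$ is a constant-scalar-curvature metric whose conformal class realizes the Yamabe invariant, that is $Q_g(\Sigma^n)=\sigma(\Sigma^n)$. As in the closed case, a metric that both realizes the (negative) Yamabe invariant and is itself a Yamabe metric must have vanishing traceless Ricci, hence be Einstein, with the boundary totally geodesic since $H^{\partial\Sigma,\Sigma}=0$. I would justify this via the first-variation characterization of invariant-realizing metrics adapted to Escobar's boundary functional. The main obstacle is exactly making this step rigorous: the existence and regularity of the realizer and the boundary version of the ``realizer is Einstein'' rigidity are the subtle ingredients, and this is where the external boundary Yamabe theory must be invoked rather than derived.
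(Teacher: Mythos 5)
Your proposal is correct and follows essentially the same route as the paper's proof: the same substitution of the Schoen--Yau identity \eqref{Gauss} and the free-boundary relation $\II^{\partial M}(N,N)=H^{\partial M}-H^{\partial\Sigma,\Sigma}$ into Escobar's functional, the same use of $a_n>2$ to absorb the gradient term, the same H\"older argument in the negative regime to get $(2\lambda_1(J)+a+H^2)|\Sigma^n|^{2/n}\le Q_g(\Sigma^n)\le\sigma(\Sigma^n)$, and the same deferral to external boundary-Yamabe theory for the Einstein/totally-geodesic conclusion. The one point where you diverge is the equality case, and there your treatment is actually cleaner than the paper's: the paper works with a minimizing sequence $\{\varphi_\ell\}$ and asserts each $\varphi_\ell$ must be constant (which equality in the limit does not literally force), whereas invoking an actual positive Escobar minimizer in the negative case and forcing it constant via the H\"older equality is the rigorous way to trace the equalities back.
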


\begin{proof}
Using the variational characterization \eqref{variational} of $\lambda_1(J)$, the fact that
\[
a_n=\frac{4(n-1)}{n-2}>2,
\]
and our curvature assumptions, we obtain
\begin{align*}
2\lambda_1(J)\int_{\Sigma} \varphi^2 \, dv
&\leq
2\int_{\Sigma} \left(|\nabla \varphi|^2-\bigl(\Ric(N,N)+|A|^2\bigr)\varphi^2\right) dv
-2\int_{\partial \Sigma} \II^{\partial M}(N,N)\varphi^2\,da \\
&\leq
\int_{\Sigma}\left(a_n|\nabla\varphi|^2-\bigl(S-S_\Sigma+H^2+|A|^2\bigr)\varphi^2\right)dv \\
&\qquad
-2\int_{\partial\Sigma}\bigl(H^{\partial M}-H^{\partial\Sigma,\Sigma}\bigr)\varphi^2\,da \\
&\leq
\int_{\Sigma}\left(a_n|\nabla\varphi|^2+S_\Sigma\varphi^2\right)dv
-\int_{\Sigma}(a+H^2)\varphi^2\,dv
+2\int_{\partial\Sigma}H^{\partial\Sigma,\Sigma}\varphi^2\,da .
\end{align*}
Therefore,
\begin{align*}
(2\lambda_1(J)+a+H^2)\int_{\Sigma}\varphi^2\,dv
&\leq
\int_{\Sigma}\left(a_n|\nabla\varphi|^2+S_\Sigma\varphi^2\right)dv
+2\int_{\partial\Sigma}H^{\partial\Sigma,\Sigma}\varphi^2\,da \\
&=
Q_{g|_\Sigma}(\varphi)\left(\int_{\Sigma}\varphi^{\frac{2n}{n-2}}\,dv\right)^{\frac{n-2}{n}} .
\end{align*}

Since $\sigma(\Sigma)<0$, we can choose a sequence of smooth functions $\{\varphi_\ell\}$ such that
\[
Q_{g|_\Sigma}(\varphi_\ell)<0
\qquad\text{and}\qquad
Q_{g|_\Sigma}(\varphi_\ell)\to \sigma(\Sigma).
\]
In particular, this implies
\[
2\lambda_1(J)+a+H^2<0.
\]
By Hölder's inequality,
\[
\int_{\Sigma}\varphi_\ell^2\,dv
\le |\Sigma|^{\frac{2}{n}}
\left(\int_{\Sigma}\varphi_\ell^{\frac{2n}{n-2}}\,dv\right)^{\frac{n-2}{n}},
\]
and since $Q_{g|_\Sigma}(\varphi_\ell)<0$, we get
\[
(2\lambda_1(J)+a+H^2)|\Sigma|^{\frac{2}{n}}
\le Q_{g|_\Sigma}(\varphi_\ell).
\]
Letting $\ell\to\infty$, we obtain \eqref{201}.

Assume now that equality holds in \eqref{201}. Then equality must occur in all the inequalities above. Hence
\[
|A|^2=0,
\qquad
S|_{\Sigma}=a,
\qquad
H^{\partial M}|_{\partial\Sigma}=0,
\qquad
\II^{\partial M}(N,N)|_{\partial\Sigma}=0,
\qquad
\Ric(N,N)=-\lambda_1(J),
\]
and also
\[
Q_{g|_\Sigma}(1)=\sigma(\Sigma).
\]
Since $|A|^2=0$, the hypersurface $\Sigma$ is totally geodesic in $M$. Moreover,
\[
S_\Sigma=S|_{\Sigma}-2\Ric(N,N)=a+2\lambda_1(J)<0.
\]
Using the free-boundary relation,
\[
H^{\partial\Sigma,\Sigma}
=
H^{\partial M}|_{\partial\Sigma}-\II^{\partial M}(N,N)|_{\partial\Sigma}=0.
\]

Finally, $Q_{g|_\Sigma}(1)=\sigma(\Sigma)$ shows that the induced metric $g|_\Sigma$ realizes the Yamabe invariant $\sigma(\Sigma)$. Since the normalized Yamabe quotient is invariant under constant homotheties, after rescaling $g|_\Sigma$ to have unit volume we may apply \cite[Proposition 5.3]{CruzSantos} and conclude that $(\Sigma^n,g|_\Sigma)$ is Einstein with totally geodesic boundary.
\end{proof}

Escobar also introduced a second invariant (see \cite{E92}), which we now describe. Consider the functional
\begin{equation}\label{yx}
 Y(\varphi) =
 \dfrac{\int_{\Sigma^n} \left(a_n|\nabla \varphi|^2 + S_\Sigma \varphi^2 \right)dv + 2\int_{\partial \Sigma}  H^{\partial\Sigma,\Sigma} \varphi^2 \, da }
 {\left(\int_{\partial\Sigma} |\varphi|^{\frac{2(n-1)}{n-2}}\,da\right)^{\frac{n-2}{n-1}}},
\end{equation}
where $\varphi \in C^\infty(\Sigma^n)$ and $\varphi|_{\partial\Sigma}\not\equiv 0$.

The second \emph{Yamabe constant for manifolds with boundary} is defined by
\[
Y(\Sigma,\partial\Sigma)
=
\inf \left\{
Y(\varphi) : \varphi\in C^\infty(\Sigma^n),\ \varphi|_{\partial\Sigma}\not\equiv 0
\right\}.
\]
Again, it depends only on the conformal class $[g]$; see \cite{E92}. Moreover,
\[
Y(\Sigma,\partial \Sigma) \leq Y(\mathbb{B}^n,\partial \mathbb{B}^n).
\]

The second \emph{Yamabe invariant of} $\Sigma^n$ is
\begin{eqnarray*}
 \tau ( \Sigma,\partial\Sigma) = \sup_{ [g] \in \mathcal{C}(\Sigma) }  Y(\Sigma, \partial\Sigma).
\end{eqnarray*}

Using this invariant we obtain the following estimate.

\begin{theorem}
Let $(M^{n+1}, g)$ be an orientable Riemannian manifold with boundary, $n\geq 3$, with scalar curvature $S\geq 0$ and $H^{\partial M}\geq b$. Let $ \Sigma^{n} \subset M^{n+1}$ be a compact orientable hypersurface with constant mean curvature $H$ and free boundary.
Assume \eqref{eq:Dirichlet-positive} and that $\tau(\Sigma,\partial\Sigma) < 0$. Then $\sigma_1(J) +b<0$ and
\begin{equation}\label{305}
\left| \partial \Sigma \right|\geq \left(\frac{\tau(\Sigma, \partial \Sigma)}{2(\sigma_1(J) +b)}\right)^{n-1}.
\end{equation}
Moreover, equality in \eqref{305} holds if and only if $\tau(\Sigma,\partial\Sigma)=Y(1)$, $\Sigma^n$ is totally geodesic in $M^{n+1}$, $S\big|_{\Sigma^n}=0$, $H^{\partial M}\big|_{\partial\Sigma}=b$, $\II^{\partial M}(N,N)\big|_{\partial\Sigma}=-\sigma_1(J)$, and $\Ric(N,N)=0$.
\end{theorem}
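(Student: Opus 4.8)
The plan is to run the argument of Theorem~\ref{202} in the boundary setting, replacing the Rayleigh characterization of $\lambda_1(J)$ by the Jacobi--Steklov characterization \eqref{eq:sigma1-var-correct}, the interior Yamabe functional by the boundary functional $Y$ of \eqref{yx}, and the volume $|\Sigma^n|$ by the boundary area $|\partial\Sigma|$. The one structural novelty is the starting inequality. Whereas \eqref{eq:sigma1-var-correct} is phrased over $J$-harmonic extensions, I first upgrade it to an inequality valid for \emph{every} test function: given $\varphi\in H^1(\Sigma^n)$ with $\varphi|_{\partial\Sigma}\not\equiv 0$, let $\widehat h$ be the $J$-harmonic extension of $h=\varphi|_{\partial\Sigma}$. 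Under Assumption~\ref{ass:coercive}, Proposition~\ref{prop:wellposed} says $\widehat h$ minimizes the interior energy among all extensions of $h$; since the boundary term of the form $Q$ in \eqref{eq:Q-JS} depends only on $h$, the full form $Q$ is also minimized by $\widehat h$ over that class, so $Q[\varphi]\ge Q[\widehat h]\ge \sigma_1(J)\int_{\partial\Sigma}\varphi^2$. This yields
\[
\sigma_1(J)\int_{\partial\Sigma}\varphi^2\,da\le Q[\varphi]\qquad\text{for all such }\varphi,
\]
the exact analogue of the variational inequality used in Theorem~\ref{202}.

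With this in hand I reproduce the computation of Theorem~\ref{202}. Multiplying by $2$ and using $a_n=\tfrac{4(n-1)}{n-2}>2$ to replace $2|\nabla\varphi|^2$ by $a_n|\nabla\varphi|^2$, then substituting the Schoen--Yau identity \eqref{Gauss} (so that $2(\Ric_M(N,N)+|A|^2)=S-S_\Sigma+H^2+|A|^2$) together with the free-boundary identity $\II^{\partial M}(N,N)=H^{\partial M}-H^{\partial\Sigma,\Sigma}$, the interior $S_\Sigma$-term and the boundary $H^{\partial\Sigma,\Sigma}$-term assemble precisely into the numerator of $Y(\varphi)$. Discarding the nonnegative contributions $S\ge 0$, $H^2\ge 0$, $|A|^2\ge 0$ and bounding $H^{\partial M}\ge b$ on $\partial\Sigma$ then gives
\[
2\big(\sigma_1(J)+b\big)\int_{\partial\Sigma}\varphi^2\,da
\le Y(\varphi)\Big(\int_{\partial\Sigma}\varphi^{\frac{2(n-1)}{n-2}}\,da\Big)^{\frac{n-2}{n-1}}.
\]

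To finish, I argue as in Theorem~\ref{202}, but with the boundary Hölder inequality. Since $\tau(\Sigma^n,\partial\Sigma)<0$ there are test functions with $Y(\varphi)<0$; for any such $\varphi$ the right-hand side above is negative while $\int_{\partial\Sigma}\varphi^2>0$, forcing $\sigma_1(J)+b<0$. The power-mean inequality on the $(n-1)$-dimensional boundary gives $\int_{\partial\Sigma}\varphi^2\le |\partial\Sigma|^{\frac{1}{n-1}}\big(\int_{\partial\Sigma}\varphi^{\frac{2(n-1)}{n-2}}\,da\big)^{\frac{n-2}{n-1}}$; multiplying this by the negative number $2(\sigma_1(J)+b)$ reverses it, and combining with the displayed inequality and cancelling the common Sobolev factor yields $2(\sigma_1(J)+b)|\partial\Sigma|^{\frac{1}{n-1}}\le Y(\varphi)$. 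Taking the infimum over $\varphi$ produces $Y(\Sigma^n,\partial\Sigma)$ on the right, and since $\tau\ge Y(\Sigma^n,\partial\Sigma)$ (both negative) one final division by the negative factor gives $|\partial\Sigma|^{\frac{1}{n-1}}\ge \tau(\Sigma^n,\partial\Sigma)/\big(2(\sigma_1(J)+b)\big)$, which is \eqref{305}.

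For the equality case, the strict inequality $a_n>2$ forces $\nabla\varphi\equiv 0$, so the extremal function is constant; normalizing to $\varphi\equiv 1$ gives $\tau=Y(1)$. Equality in the variational inequality then means $1$ is a Jacobi--Steklov eigenfunction, i.e.\ $J1=\Ric_M(N,N)+|A|^2=0$ on $\Sigma^n$ and $B_0 1=-\II^{\partial M}(N,N)=\sigma_1(J)$ on $\partial\Sigma$, while vanishing of the discarded terms gives $S|_{\Sigma^n}=0$ and $A\equiv 0$ (so $\Sigma^n$ is totally geodesic, $H=0$), and saturation of $H^{\partial M}\ge b$ gives $H^{\partial M}|_{\partial\Sigma}=b$; feeding these back into \eqref{Gauss} yields $\Ric_M(N,N)=0$ and $\II^{\partial M}(N,N)=-\sigma_1(J)$. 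The converse is immediate by retracing the equalities, exactly as in Theorem~\ref{Estimate3}. I expect the main obstacle to be the first paragraph: because the Steklov spectrum is only characterized through $J$-harmonic extensions, plugging in an arbitrary $\varphi$ (in particular a constant, which is not $J$-harmonic unless $\Ric_M(N,N)+|A|^2\equiv 0$) requires the extension-minimization step, and this is precisely where Assumption~\ref{ass:coercive} is indispensable; everything else is a careful but routine transcription with the boundary Sobolev exponent $\tfrac{2(n-1)}{n-2}$.
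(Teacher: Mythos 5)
Your proposal is correct and follows essentially the same route as the paper's own proof: the same upgrade of the Steklov characterization \eqref{eq:sigma1-var-correct} to $\sigma_1(J)\int_{\partial\Sigma}\varphi^2 \le Q[\varphi]$ via the energy-minimizing property of the $J$-harmonic extension, the same substitution of \eqref{Gauss} and the free-boundary identity with $a_n>2$ to assemble $Y(\varphi)$, the same boundary H\"older argument, and the same tracing of equalities. If anything, your handling of the final step (passing from $Y(\Sigma^n,\partial\Sigma)$ to $\tau$ via $Y\le\tau$ and dividing by the negative factor) is stated more precisely than the paper's ``infimum then supremum over conformal classes'' phrasing, but it is the same argument.
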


\begin{proof}
Fix $\psi\in C^\infty(\Sigma^n)$ with $\psi\not\equiv 0$ on $\partial\Sigma$ and set $h:=\psi|_{\partial\Sigma}$.
Let $\widehat h$ be the $J$-harmonic extension given by Proposition~\ref{prop:wellposed}. By \eqref{eq:sigma1-var-correct},
\[
\sigma_1(J)\le \frac{Q[\widehat h]}{\int_{\partial\Sigma}h^2}.
\]
Since $\widehat h$ minimizes the Dirichlet energy among all extensions with trace $h$, we have
\[
Q[\widehat h]\le Q[\psi].
\]
Hence
\begin{equation}\label{eq:sigma-psi}
\sigma_1(J)\int_{\partial\Sigma}\psi^2\,da
\le Q[\psi]
=
\int_{\Sigma^n}\left(|\nabla\psi|^2-(\Ric(N,N)+|A|^2)\psi^2\right)dv
-\int_{\partial\Sigma}\II^{\partial M}(N,N)\psi^2\,da.
\end{equation}

Using \eqref{Gauss}, the inequality $a_n>2$, and the assumptions $S\ge 0$ and $H^{\partial M}\ge b$, we estimate the right-hand side exactly as in the proof of Theorem~\ref{202}, now with the boundary terms included, and obtain
\[
2(\sigma_1(J)+b)\int_{\partial\Sigma}\psi^2\,da
\le
\int_{\Sigma^n}\left(a_n|\nabla\psi|^2+S_\Sigma\psi^2\right)dv
+2\int_{\partial\Sigma}H^{\partial\Sigma,\Sigma}\psi^2\,da.
\]
Therefore,
\[
2(\sigma_1(J)+b)\int_{\partial\Sigma}\psi^2\,da
\le
Y(\psi)\left(\int_{\partial\Sigma}|\psi|^{\frac{2(n-1)}{n-2}}\,da\right)^{\frac{n-2}{n-1}}.
\]
If $Y(\psi)<0$, then Hölder's inequality gives
\[
\int_{\partial\Sigma}\psi^2\,da
\le |\partial\Sigma|^{\frac{1}{n-1}}
\left(\int_{\partial\Sigma}|\psi|^{\frac{2(n-1)}{n-2}}\,da\right)^{\frac{n-2}{n-1}},
\]
and hence
\[
2(\sigma_1(J)+b)\,|\partial\Sigma|^{\frac{1}{n-1}} \le Y(\psi).
\]
Taking the infimum over $\psi$ and then the supremum over conformal classes yields \eqref{305}. The equality case is obtained by tracing through the equalities exactly as in Theorem~\ref{202}.
\end{proof}

%---------------------------------------------------------
\subsection*{Acknowledgments}  
%---------------------------------------------------------
The authors would like to thank the anonymous referees for their careful reading of the manuscript and for their constructive comments and suggestions, which helped to improve the paper. 
We are also grateful to Abra\~ao Mendes for interesting discussions and suggestions regarding this work. 
The authors were partially supported by the National Council for Scientific and Technological Development -- CNPq (Brazil) and the Foundation for Research Support of the State of Alagoas -- FAPEAL. The first author was supported by CNPq (Grants 405468/2021-0, 308440/2021-8 and 402463/2023-9) and FAPEAL (Grant 60030.0000001758/2022). 
The second author was supported by CNPq (Grants 405468/2021-0 and 311136/2023-0) and FAPEAL (Grant 60030.0000000323/2023). 
This study was financed in part by the Coordena\c{c}\~ao de Aperfei\c{c}oamento de Pessoal de N\'ivel Superior -- Brasil (CAPES) -- Finance Code 001.

\bibliographystyle{amsplain}
\bibliography{bibliography}

\end{document}